\crefname{hypothesis}{Hypothesis}{Hypotheses}
\newcommand{\bm}[1]{\mbox{\boldmath $#1$}}
\title{Tensor-based Homogeneous Polynomial Dynamical System Analysis from Data\thanks{Submitted to the editors \today.
\funding{Funding information goes here.}}}
\author{Xin Mao\thanks{School of Data Science and Society, University of North Carolina at Chapel Hill, Chapel Hill, NC 27599, USA (\email{xinm@unc.edu}).}
\and Anqi Dong\thanks{Division of Decision and Control Systems and Department of Mathematics, KTH Royal Institute of Technology, SE-100 44 Stockholm, Sweden (\email{anqid@kth.se}).}
\and Ziqin He\thanks{Department of Mathematics, University of North Carolina at Chapel Hill, Chapel Hill, NC 27599, USA (\email{zhe21@unc.edu}).}
\and Yidan Mei\thanks{Department of Mathematics, University of North Carolina at Chapel Hill, Chapel Hill, NC 27599, USA (\email{ymei@unc.edu}).}
\and Shenghan Mei\thanks{Department of Mathematics, University of North Carolina at Chapel Hill, Chapel Hill, NC 27599, USA (\email{shmei@unc.edu}).}
\and Can Chen\thanks{School of Data Science and Society, Department of Mathematics, and Department of Biostatistics, University of North Carolina at Chapel Hill, Chapel Hill, NC 27599, USA(\email{canc@unc.edu}).}}
\begin{document}
\maketitle

\begin{abstract}
Numerous complex real-world systems, such as those in biological, ecological, and social networks, exhibit higher-order interactions that are often modeled using polynomial dynamical systems or homogeneous polynomial dynamical systems (HPDSs). However, identifying system parameters and analyzing key system-theoretic properties remain challenging due to their inherent nonlinearity and complexity, particularly for large-scale systems. To address these challenges, we develop an innovative computational framework in this article that leverages advanced tensor decomposition techniques, namely tensor train and hierarchical Tucker decompositions, to facilitate efficient identification and analysis of HPDSs that can be equivalently represented by tensors. Specifically, we introduce memory-efficient system identification techniques for directly estimating system parameters represented through tensor decompositions from time-series data. Additionally, we develop necessary and sufficient conditions for determining controllability and observability using the tensor decomposition-based representations of HPDSs, accompanied by detailed complexity analyses that demonstrate significant reductions in computational demands. The effectiveness and efficiency of our framework are validated through numerical examples.
\end{abstract}
	
\begin{keywords}
homogeneous polynomial dynamical systems, system identification, controllability, observability, tensors, tensor train decomposition, hierarchical Tucker decomposition
\end{keywords}
	
\begin{MSCcodes}
15A69, 53A45, 93B05, 93B07, 93B40, 93C15, 93D05
\end{MSCcodes}

\section{Introduction}
Polynomial dynamical systems are pivotal in modeling the dynamics of complex, nonlinear, and higher-order processes, with applications across diverse fields, including biological networks, ecological networks, chemical reactions, epidemiology, robotics, and power systems \cite{bairey2016high, bastolla2009architecture, chellaboina2009modeling, chernyshenko2014polynomial, guckenheimer2013nonlinear, lorenz1963deterministic, malizia2024reconstructing}.  In ecological networks, for instance, species often engage in higher-order interactions, where the relationship between two species is influenced by the presence of one or more additional species. Such dynamics can be effectively captured using a higher-order generalized Lotka-Volterra model, which is a type of polynomial dynamical systems \cite{bairey2016high,bick2023higher}. While understanding polynomial dynamical systems is challenging due to their inherent nonlinearities, they can be simplified into homogeneous polynomial dynamical systems (HPDSs) under certain conditions or transformations (e.g., homogenization). This highlights the need for effective and efficient computational tools to analyze HPDSs. Several system analysis techniques, including stability, controllability, and observability, have been developed for HPDSs \cite{ahmadi2019algebraic, baillieul1981controllability, chen2022explicit, chen2021controllability, jurdjevic1985polynomial, pickard2023observability}. However, these methods are often too complicated to represent and evaluate due to the involvement of convoluted mathematical operations, such as Lie brackets and derivatives.

Recent advances in tensor-based methods have introduced powerful tools for modeling nonlinear dynamics \cite{ batselier2022low,batselier2017tensor,brazell2013solving,cui2024discrete,cui2025metzler,dolgov2021tensor,dong2024controllability,gorodetsky2018high,liu2016near,pickard2023observability}. Tensors, which generalize vectors and matrices to higher dimensions, are well-suited for capturing complex, multidimensional relationships in dynamical systems \cite{kolda2006multilinear}. In particular, every HPDS can be equivalently expressed as a tensor, serving as an elegant analog to matrices in linear dynamical systems \cite{chen2022explicit,chen2024stability}. This tensor-based representation facilitates the exploration of key system properties, such as stability, controllability, and observability, in a compact and mathematically tractable form. For example, the authors of \cite{chen2021controllability} employed tensor algebra to generalize Kalman's rank condition for determining the controllability of tensor-based HPDSs with linear input. Similarly, a tensor-based rank condition for HPDS observability was discussed in \cite{pickard2023observability}. Despite these advances, significant challenges persist.  One major issue is the memory and computational complexity, especially for HPDSs represented by high-dimensional and high-order tensors. Moreover, many existing methods \cite{chen2022explicit,chen2021controllability, pickard2023observability} are limited to specific  HPDSs represented by symmetric tensors, which are inadequate for fully capturing the complex structures and behaviors of general HPDSs in real-world applications. 

To address the gap, this article aims to utilize tensor decomposition to represent tensor-based HPDSs in a more compact form, effectively reducing system dimensionality, lowering memory requirements, and enabling more efficient computations compared to the full tensor representation. There are various tensor decomposition techniques in the literature, such as CANDECOMP/PARAFAC decomposition \cite{bader2008efficient,phan2013candecomp}, higher-order singular value decomposition \cite{de2000multilinear,sidiropoulos2017tensor}, tensor train decomposition (TTD) \cite{oseledets2011tensor,oseledets2009breaking}, and hierarchical tucker decomposition (HTD) \cite{grasedyck2010hierarchical,hackbusch2009new}.  
This work particularly focuses on TTD and HTD for several reasons. Compared to the first two, TTD offers a good balance between numerical stability and approximation accuracy, making it well-suited for high-dimensional tensor computations. Meanwhile, HTD excels at approximating high-dimensional tensors by employing a multi-level approach, which is highly efficient for systems involving intricate structures. Both decompositions have been successfully applied in various dynamic settings. For instance,  TTD was used for model reduction in multi-linear dynamical systems \cite{chen2021multilinear}, while HTD was applied to approximate the coefficient tensor in time-series brain imaging data, demonstrating its ability to handle large-scale, complex datasets effectively \cite{hou2015hierarchical}.

In this article, we develop an innovative computational framework that leverages TTD and HTD to enable efficient identification and analysis of HPDSs. We first introduce memory-efficient system identification techniques by generalizing the fundamental lemma \cite{ van2020data, willems2005note} for tensor-based HPDSs,  enabling the direct estimation of system parameters represented through TTD and HTD from time-series data.  After obtaining the TTD- and HTD-based representations, we derive necessary and sufficient conditions for determining the controllability and observability of input/output tensor-based HPDSs by leveraging the TTD and HTD factor matrices/tensors, which facilitate efficient storage and computation based on the low-rank structures inherent in the original tensors. We further analyze the memory and computational complexity of TTD- and HTD-based methods, comparing them to the full tensor representation-based approaches, and demonstrate their effectiveness and efficiency through numerical simulations. Our framework has significant potential to advance the understanding of complex real-world systems modeled by polynomial dynamical systems or HPDSs. For example, it can  be applied to efficiently capture higher-order species interactions in ecological networks from time-series abundance data (which are often difficult to observe through experiments) and inform more effective designs of optimal control strategies to support ecological conservation efforts.

The article is organized into six sections. In \cref{sec: preliminaries}, we present an overview of tensor preliminaries, including matrix/tensor products, tensor unfolding,  tensor decomposition, and the tensor-based representation of HPDSs. In \cref{sec: sysid}, we develop system identification techniques for tensor-based HPDSs and their associated input/output systems, with a particular focus on the TTD- and HTD-based representations.  We subsequently analyze the controllability and observability of tensor-based HPDSs, deriving efficient conditions using TTD and HTD factor matrices/tensors in  \cref{sec: HPDS}. We validate our framework through numerical simulations in \cref{sec: simulation} and  conclude with future research directions in  \cref{sec:conclusion}.

\section{Preliminaries}\label{sec: preliminaries}
\subsection{Kronecker product}
The Kronecker product plays a significant role in tensor algebra \cite{liu2008hadamard}. Given two matrices of arbitrary sizes $\textbf A\in\mathbb{R}^{n\times m}, \textbf B\in\mathbb{R}^{s\times r}$, the Kronecker product of the two matrices is defined as 
\[
\textbf A\otimes \textbf B=\begin{bmatrix}
\textbf A_{11}\textbf B &\cdots &\textbf A_{1m}\textbf B\\
\vdots& \ddots &\vdots\\
\textbf A_{n1}\textbf B &\cdots &\textbf A_{nm}\textbf B
\end{bmatrix}\in\mathbb{R}^{ns\times mr},
\]
where $\textbf A_{ij}$ denotes the $(i,j)$th element of $\textbf A$.
The Kronecker product has several useful properties, including the mixed-product property which states that for  matrices $\textbf A$, $\textbf B$, $\textbf C$, and  $\textbf D$ with compatible dimensions, the following relationship holds:
\[
(\textbf A\otimes\textbf B)(\textbf C\otimes\textbf D) =(\textbf A\textbf C) \otimes(\textbf B\textbf D).
\]
Given two matrices $\textbf A\in\mathbb{R}^{n\times s}$ and $\textbf B\in\mathbb{R}^{m\times s}$, the Khatri-Rao product of the two matrices is defined as 
\[
\textbf A\odot \textbf B=\begin{bmatrix}
\textbf a_1\otimes \textbf b_1& \textbf a_2\otimes \textbf b_2&\cdots &\textbf a_s\otimes \textbf b_s
\end{bmatrix}\in\mathbb{R}^{nm\times s},
\]
where $\textbf a_j$ and $\textbf b_j$ denote the $j$th column of $\textbf A$ and $\textbf B$, respectively. 

\subsection{Tensors}
A tensor is a multidimensional array that extends the concepts of scalars, vectors, and matrices to higher dimensions \cite{chen2024tensor,kolda2009tensor,kolda2006multilinear}. Mathematically, the order of a tensor refers to the number of its dimensions, and each dimension is called a mode. A $k$th-order tensor is typically denoted by $\mathscr A\in\mathbb{R}^{n_1\times n_2\times\cdots\times n_k}$. 
A tensor is called cubical if all its modes have the same size, i.e., $n_1=n_2=\cdots=n_k$. A cubical tensor is called symmetric if its entries $\mathscr A_{j_1j_2\cdots j_k}$ are invariant under any permutation of the indices, and is called almost symmetric if its entries are invariant under any permutation of the first $k-1$ indices. As an illustrative example,  a fourth-order cubical tensor $\mathscr A\in\mathbb{R}^{n\times n\times n\times n}$ is almost symmetric if 
\begin{align*}
\mathscr A_{j_1j_2j_3j_4}= \mathscr A_{j_1j_3j_2j_4}=\mathscr A_{j_2j_1j_3j_4}
=\mathscr A_{j_2j_3j_1j_4}=\mathscr A_{j_3j_1j_2j_4}=\mathscr A_{j_3j_2j_1j_4}
\end{align*}
for all $j_1, j_2, j_3=1,2,\dots,n$. 

\subsection{Tensor products}	
Given a $k$th-order tensor $\mathscr A\in\mathbb{R}^{n_1\times n_2\times\cdots\times n_k}$ and a vector $\mathbf v\in\mathbb{R}^{n_p}$, the tensor vector multiplication $\mathscr A \times_p \mathbf v\in\mathbb{R}^{n_1\times n_2\times \cdots\times n_{p-1}\times n_{p+1}\times \cdots\times n_k}$ along mode $p$  is defined as 
\[
(\mathscr A \times_p \textbf v)_{j_1j_2\cdots j_{p-1}j_{p+1}\cdots j_k}=\sum_{j_p=1}^{n_p}\mathscr A_{j_1j_2\cdots j_k}\textbf v_{j_p}.
\]
It can be extended across all modes of $\mathscr{A}$ as
\begin{align*}
\mathscr A\times_1 \textbf v_1 \times_2 \textbf v_2\times_3\dots\times_k\textbf v_k \in\mathbb{R},
\end{align*}
where $\textbf{v}_p\in\mathbb{R}^{n_p}$ for $p=1,2,\dots,k$. If $\mathscr{A}$ is symmetric and $\textbf v_p=\textbf v$ for all $p$, the product is referred to as the homogeneous polynomial associated with $\mathscr A$. We write it as $\mathscr A\textbf v^{k}$ for simplicity, i.e., $\mathscr A\textbf v^{k} = \mathscr{A}\times_1\textbf{v}\times_2\textbf{v}\times_3\cdots\times_k\textbf{v}$. Therefore, if $\mathscr{A}$ is almost symmetric, the product $\mathscr A\textbf v^{k-1}$ becomes a homogeneous polynomial system. Furthermore, tensor matrix multiplications can be defined similarly. 

\subsection{Tensor unfolding}
Tensor unfolding, also known as tensor flattening, is a fundamental operation in tensor algebra that reorganizes the entries of a tensor into a matrix or vector representation \cite{kolda2009tensor,ragnarsson2012block}. Given a $k$th-order tensor $\mathscr A\in\mathbb{R}^{n_1\times n_2\times\dots\times n_k}$, we formalize this process by defining an index mapping function 
\vspace{-5pt}
\begin{equation*}
    \psi(\{j_1,j_2,\dots,j_k\},\{n_1,n_2,\dots,n_k\})=j_1+\sum_{i=2}^k(j_i-1)\underset{l=1}{\overset{i-1}{\prod}}n_l.
\vspace{-5pt}
\end{equation*}
This function returns the index for tensor vectorization, i.e., $\text{vec}(\mathscr{A})\in\mathbb{R}^{\prod_{p=1}^k j_p}$ is the vectorization of $\mathscr{A}$ such that $\text{vec}(\mathscr{A})_{\psi} = \mathscr{A}_{j_1j_2\cdots j_k}$.

For tensor matricization, the modes are divided into two disjoint sets with $\mathcal{R}=\{r_1,r_2,\dots,r_d\}$ and $\mathcal{C}=\{c_1,c_2,\dots,c_{k-d}\}$, where the elements in both sets are arranged in increasing order. The tensor matricization therefore is obtained by merging the modes in $\mathcal{R}$ into row indices and the modes in $\mathcal{C}$ into column indices. Specifically, let $r=\psi(\{j_{r_1},\dots,j_{r_d}\},\{n_{r_1},\dots,n_{r_d}\})$ and $c=\psi(\{j_{c_1},\dots,j_{c_{k-d}}\},\{n_{c_1},\dots,n_{c_{k-d}}\})$, and the resulting $\mathcal{R}$-unfolded matrix of $\mathscr{A}$ is defined as
\begin{equation*}
    \textbf{A}_{(\mathcal{R})}\in\mathbb{R}^{(\prod_{p=1}^{d}n_{r_p})\times(\prod_{p=1}^{k-d}n_{c_{p}})} \quad \text{such that}\quad (\textbf{A}_{(\mathcal{R})})_{rc} = \mathscr A_{j_1j_2\cdots j_k}.
\end{equation*}
The $p$-mode matricization of $\mathscr A$ is a special case of tensor unfolding with $\mathcal{R}=\{p\}$ and  $\mathcal{C}=\{1,2,\dots,p-1,p+1,\dots,k\}$. This operation flattens the tensor along mode $p$ into a matrix representation, denoted by $\textbf{A}_{(p)}\in\mathbb{R}^{n_p\times(n_1n_2\cdots n_{p-1}n_{p+1}\cdots n_k)}$. If $\mathscr{A}$ is symmetric, the $p$-mode matricizations are equivalent for all $p$.

\subsection{Tensor decomposition}
Tensor decomposition is a fundamental technique for transforming high-dimensional tensors into lower-dimensional components \cite{kolda2009tensor}. Various tensor decompositions have been proposed, including higher-order singular value decomposition \cite{de2000multilinear}, CANDECOMP/PARAFAC decomposition \cite{bader2008efficient}, tensor train decomposition (TTD) \cite{oseledets2011tensor}, and hierarchical tucker decomposition (HTD) \cite{grasedyck2010hierarchical}. This work focuses on TTD and HTD due to their distinct advantages over the first two. TTD strikes a balance between numerical stability and approximation accuracy, making it effective for high-dimensional tensor computations, while HTD employs a multi-level approach well-suited for complex hierarchical structures. 

\begin{figure}[htbp]
\centering
\begin{minipage}[t]{\textwidth} % 
\centering
\includegraphics[width=0.7\linewidth]{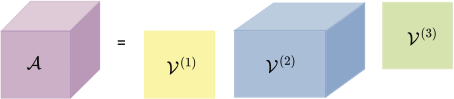} 
\caption{An example of the TTD of a third-order tensor.}
\label{fig: TTD}
\end{minipage}
\\ 
\vspace{0.5cm}
\begin{minipage}[t]{\textwidth} 
\centering
\includegraphics[width=0.55\linewidth]{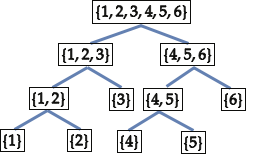} % 
\caption{An example of the HTD of a sixth-order tensor.}
\label{fig: HTD}
\end{minipage}
\vspace{-5pt}
\end{figure}

TTD represents a high-dimensional tensor as a sequence or ``train'' of interconnected third-order tensors, which results in a compact and structured representation. For a $k$th-order tensor $\mathscr A\in\mathbb{R}^{n_1\times n_2\times\dots\times n_k}$, the TTD of $\mathscr{A}$ is expressed as 
\begin{equation}\label{eq:ttd}
\mathscr A=\sum_{j_0=1}^{r_0}\sum_{j_1=1}^{r_1}\cdots\sum_{j_k=1}^{r_k}\mathscr V_{j_0:j_1}^{(1)}\circ \mathscr V_{j_1:j_2}^{(2)}\circ\dots\circ \mathscr V_{j_{k-1}:j_k}^{(k)},
\end{equation}
where $\{r_0, r_1,\dots, r_k\}$ is the set of TT-ranks with $r_0=r_k=1$, and $\mathscr V^{(p)}\in\mathbb{R}^{r_{p-1}\times n_p\times r_p}$ are third-order factor tensors, see \cref{fig: TTD}. The colon ``:'' denotes the MATLAB colon operator acting as shorthand to include all subscripts in a particular array
dimension. Due to the numerical stability, TTD reliably determines the optimal TT-ranks of $\mathscr{A}$, which are computed as $r_p=\text{rank}(\textbf A_{(\{1,2,\dots,p\})})$  where $\textbf A_{(\{1,2,\dots,p\})}$ are the $\{1,2,\dots,p\}$-unfolded matrix of $\mathscr{A}$ for $p=1,2,\dots,k-1$.

HTD employs a tree-structured factorization to represent a tensor recursively, splitting its dimensions at each level to construct factor matrices.  HTD relies on the concept of a dimension tree $\mathcal {T}$, which is a binary tree with the following properties: (i) each node is represented by a subset of $\{1,2,\dots,k\}$; (ii) the root node is $\{1,2,\dots,k\}$; (iii) each leaf node is a singleton; (iv) each parent node is the disjoint union of its two children nodes, as shown in \cref{fig: HTD}. The level of a node in the binary tree is defined as the number of edges on the path from the root node to that node. The root node is considered to be at level zero. The level of the binary tree is the maximum level among all nodes in the tree, given by $d=\lceil\log_2k\rceil$.

Given a $k$th-order tensor $\mathscr A\in\mathbb{R}^{n_1\times n_2\times\dots\times n_k}$, the binary tree encodes a hierarchy of factor matrices $\textbf U_\mathcal Q$, each of which serves as a basis for the column space of the $\mathcal{Q}$-unfolded matrix $\textbf A_{(\mathcal Q)}$ for a node $\mathcal{Q}\in\mathcal{T}$.  Importantly, not all factor matrices in the HTD of $\mathscr{A}$ need to be explicitly stored as the matrix $\textbf U_\mathcal Q$ of a parent node can be computed from its left   child node $\textbf U_{\mathcal Q_l}$ and right child node $\textbf U_{\mathcal Q_r}$ as
\begin{align}\label{eq: HTrelation}
\textbf U_\mathcal Q=(\textbf U_{\mathcal Q_l}\otimes \textbf U_{\mathcal Q_r})\textbf G_{\mathcal Q},
\end{align}
where $\textbf{G}_{\mathcal Q}\in\mathbb{R}^{r_{\mathcal Q_l}r_{\mathcal Q_r}\times r_{\mathcal Q}}$ is called the transfer function with hierarchical ranks $r_{\mathcal Q_l}$, $r_{\mathcal Q_r}$, and $r_\mathcal Q$. Therefore, only the factor matrices $\textbf{U}_{p}\in\mathbb{R}^{n_p\times r_p}$ at the leaf nodes and the transfer matrices $\textbf{G}_{\mathcal{Q}}$ at different levels need to be  stored. The construction of the HTD starts from the leaf nodes and proceeds recursively by applying \eqref{eq: HTrelation} until the root node is reached, where the hierarchical rank at the root is fixed to one. Additionally,  the HTD of $\mathscr A$ in its vectorized form  can be expressed as
\[
\text{vec}({\mathscr A})=(\textbf U_{k}\otimes\textbf U_{k-1}\otimes\cdots\otimes\textbf U_{1})(\otimes_{\mathcal Q\in\mathcal G_{d-1}}\textbf G_\mathcal Q)\cdots(\otimes_{\mathcal Q\in\mathcal G_{0}}\textbf G_\mathcal Q),
\]
where $\mathcal G_j$ denote the set of nodes at level $j$ of the binary tree for $j=0,1,\dots,d-1$. 

\subsection{Tensor-based HPDSs}
Every homogeneous polynomial dynamical system (HPDS) of degree $k-1$ can be  represented using tensor vector multiplications as 
\begin{align}\label{eq: ausys}
\dot{\textbf x}(t)=\mathscr A\textbf x(t)^{k-1},
\end{align}
where $\mathscr A\in\mathbb{R}^{n\times n\times\stackrel{k}{\cdots}\times n}$ is a $k$th-order $n$-dimensional almost symmetric dynamic tensor and $\textbf x(t)\in\mathbb{R}^n$ is the state \cite{chen2022explicit,chen2024stability}. The notation $``n\times n\times\stackrel{k}{\cdots}\times n"$ represents a total of $k$ multiplications of $n$. This compact tensor-based representation introduces a new perspective for analyzing HPDSs by leveraging advanced tensor theory. Significantly, the tensor-based HPDS (\ref{eq: ausys}) can be equivalently written as 
\begin{align}\label{eq: ausysm}
\dot{\textbf x}(t)=\textbf A_{(k)}\textbf x(t)^{[k-1]},
\end{align}
where $\textbf A_{(k)}$ is the $k$-mode matricization of $\mathscr A$, and $\textbf{x}(t)^{[k-1]}$ denotes the $(k-1)$th Kronecker power, i.e., $\textbf{x}^{[k-1]}=\textbf x\otimes \textbf x\otimes\stackrel{k-1}{\cdots}\otimes \textbf x$. 

\section{System identification for tensor-based HPDSs}\label{sec: sysid}
Traditional linear system identification methods are well-established and widely used in various applications \cite{ljung1998system,ljung2010perspectives,nelles2020nonlinear,reynders2012system}. However, many real-world systems exhibit significant nonlinearities that linear models fail to capture adequately. To address this gap, we systematically investigate the identification of tensor-based HPDSs, focusing first on autonomous systems and extending the framework to input-output systems with noise.  A key feature of our framework is the direct estimation of system parameters within the TTD and HTD  of the dynamic tensor $\mathscr{A}$, leveraging time-series data to achieve more structured and scalable representations of  HPDSs.

\subsection{Identification of autonomous system}
We begin by addressing system identification in the absence of external inputs, a scenario common in autonomous systems. Our objective is to generalize the fundamental lemma and facilitate direct parameter estimation from observed data. To this end, we collect the sampled state and  the derivatives of the state data during the time interval $[t_0, t_0+(T-1)\tau]$ as 
\begin{align*}
\textbf X_0&=\begin{bmatrix}
\textbf x(t_0) & \textbf x(t_0+\tau) & \cdots & \textbf x(t_0+(T-1)\tau)\end{bmatrix}\in\mathbb{R}^{n\times T},\\
\textbf X_1&=\begin{bmatrix}
\dot{\textbf x}(t_0) & \dot{\textbf x}(t_0+\tau) & \cdots & \dot{\textbf x}(t_0+(T-1)\tau)\end{bmatrix}\in\mathbb{R}^{n\times T},
\end{align*}
where $\tau$ is the sampling time and $T$ is the number of the sampled data. We seek to establish a condition that allows for the unique identification of the dynamic tensor $\mathscr A$.	
Denote 
$
\hat{\textbf X}_0 =\textbf X_0\odot \textbf X_0\odot\stackrel{k-1}{\cdots}\odot \textbf X_0.
$
A condition similar to Willems's fundamental lemma is presented below.

\begin{proposition}
\label{thm: ausysid}
The data $(\textbf X_0, \textbf X_1)$ is sufficient for unique identification of the tensor-based HPDS (\ref{eq: ausys}) if and only if
\begin{align}\label{aurankcondition}
\mathrm{rank}(\hat{\textbf X}_0)=\sum_{j=1}^{\mathrm{min}\{n,k-1\}}\frac{n!}{j!(n-j)!}\frac{(k-2)!}{(j-1)!(k-j-1)!}.
\end{align}
\end{proposition}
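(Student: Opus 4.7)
The plan is to reduce the identification problem to a linear matrix equation. Using the equivalent matrix form (\ref{eq: ausysm}) of the HPDS and recognizing that the $j$th column of $\hat{\textbf X}_0$ is precisely $\textbf x(t_0+(j-1)\tau)^{[k-1]}$, I will rewrite the stacked data relation as $\textbf X_1 = \textbf A_{(k)}\hat{\textbf X}_0$. Unique identification of $\mathscr A$ is therefore equivalent to unique recovery of $\textbf A_{(k)}$ from this equation, subject to the almost-symmetry constraint on $\mathscr A$.

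Next, I exploit this symmetry to count effective degrees of freedom. Indexing the columns of $\textbf A_{(k)}$ and the rows of $\hat{\textbf X}_0$ by tuples $(i_1,\dots,i_{k-1})\in\{1,\dots,n\}^{k-1}$ via the map $\psi$, almost-symmetry forces the columns of $\textbf A_{(k)}$ corresponding to any two tuples that are permutations of each other to coincide. Simultaneously, commutativity of the Hadamard product entails that the associated rows of $\hat{\textbf X}_0$ are already identical. Partitioning $\{1,\dots,n\}^{k-1}$ into equivalence classes under permutation yields exactly $\binom{n+k-2}{k-1}$ classes, one per multiset of size $k-1$ drawn from $\{1,\dots,n\}$. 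I then collapse $\hat{\textbf X}_0$ to a reduced matrix $\tilde{\textbf X}_0\in\mathbb{R}^{\binom{n+k-2}{k-1}\times T}$ with one row per class, absorbing class sizes into the corresponding reduced columns of a matrix $\tilde{\textbf A}_{(k)}$. The original equation is then equivalent to $\textbf X_1 = \tilde{\textbf A}_{(k)}\tilde{\textbf X}_0$, which admits a unique $\tilde{\textbf A}_{(k)}$ (and hence a unique $\textbf A_{(k)}$, and thus a unique $\mathscr A$) if and only if $\tilde{\textbf X}_0$ has full row rank. Because duplicated rows add no rank to $\hat{\textbf X}_0$, this is equivalent to $\mathrm{rank}(\hat{\textbf X}_0)=\binom{n+k-2}{k-1}$.

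Finally, I will verify that the combinatorial expression in (\ref{aurankcondition}) equals $\binom{n+k-2}{k-1}$ via Vandermonde's identity. Choosing parameters appropriately gives $\binom{n+k-2}{k-1}=\sum_{j=0}^{k-1}\binom{n}{j}\binom{k-2}{k-1-j}$; reindexing with the identity $\binom{k-2}{k-1-j}=\binom{k-2}{j-1}$ and discarding terms that vanish outside $1\le j\le\min\{n,k-1\}$ recovers exactly the right-hand side of (\ref{aurankcondition}).

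The main technical obstacle is the collapsing step: I must carefully justify that the almost-symmetry constraint lets us replace the original equation by the genuinely equivalent reduced linear system, with no additional degrees of freedom hiding outside the symmetric representatives. Only after this equivalence is made rigorous does the rank bound become both necessary (by the row-repetition ceiling on $\mathrm{rank}(\hat{\textbf X}_0)$) and sufficient (since $\tilde{\textbf X}_0$ inherits full row rank) for uniqueness.
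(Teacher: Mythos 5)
Your proposal is correct and takes essentially the same route as the paper's proof: reduce identification to the linear relation $\textbf X_1=\textbf A_{(k)}\hat{\textbf X}_0$ and observe that almost-symmetry of $\mathscr A$ leaves only $\binom{n+k-2}{k-1}$ independent entries per row of $\textbf A_{(k)}$, matching the duplicated-row structure of $\hat{\textbf X}_0$, so that uniqueness holds exactly when $\hat{\textbf X}_0$ attains this maximal rank. Your explicit collapse to multiset equivalence classes and the Vandermonde-identity verification that $\binom{n+k-2}{k-1}=\sum_{j=1}^{\min\{n,k-1\}}\binom{n}{j}\binom{k-2}{j-1}$ simply make rigorous what the paper compresses into its appeal to ``linear matrix theory'' and its stated entry count.
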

\begin{proof}
As the tensor-based HPDS \eqref{eq: ausys} is equivalent to the unfolded system \eqref{eq: ausysm}, the object is to check whether $\textbf A_{(k)}$ can be uniquely identified from observed data. The relation of data $(\textbf X_0,\textbf X_1)$ is $\textbf A_{(k)}\hat {\textbf X}_0=\textbf X_1.$
Since $\mathscr A$ is an almost symmetric tensor, each row of $\textbf A_{(k)}$ has $n^{k-1}$ elements and $\sum_{j=1}^{\mathrm{min}\{n,k-1\}}\frac{n!}{j!(n-j)!}\frac{(k-2)!}{(j-1)!(k-j-1)!}$ independent entries. According to linear matrix theory, the necessary and sufficient condition for the unique identification of $\textbf A_{(k)}$ is that $\hat{\textbf X}_0$ has full row rank, i.e.,
\[
\mathrm{rank}(\hat{\textbf X}_0)
=\sum_{j=1}^{\mathrm{min}\{n,k-1\}}\frac{n!}{j!(n-j)!}\frac{(k-2)!}{(j-1)!(k-j-1)!}.
\]
The result follows immediately.
\end{proof}

\cref{thm: ausysid} not only provides conditions for system identification but also ensures the uniqueness of $\textbf A_{(k)}$ as the solution consistent with the observed data. When $k=2$, the result reduces to the classical Willems’s fundamental lemma.  Moreover, $\textbf A_{(k)}$ can be  computed using compact singular value decomposition (SVD), similar to dynamic model decomposition in linear dynamical systems.

\begin{proposition}\label{thm: auconstruct}
If condition (\ref{aurankcondition}) holds  and the compact SVD of $\hat{\textbf X}_0$ is provided with $\hat{\textbf X}_0=\hat{\textbf U}_0\hat{\bm\Sigma}_0\hat{\textbf V}^\top_0$, then $\textbf A_{(k)}=\textbf X_1\hat{\textbf V}_0\hat{\bm\Sigma}_0^{+}\hat{\textbf U}_0^\top$. 
\end{proposition}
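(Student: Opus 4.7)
The plan is to start from the equation $\textbf A_{(k)}\hat{\textbf X}_0 = \textbf X_1$ already established in the proof of \cref{thm: ausysid} and right-multiply both sides by the pseudoinverse $\hat{\textbf X}_0^+ = \hat{\textbf V}_0\hat{\bm\Sigma}_0^+\hat{\textbf U}_0^\top$ obtained from the given compact SVD. Because $\hat{\bm\Sigma}_0$ is square and invertible in the compact SVD, the standard SVD identities give $\hat{\textbf X}_0\hat{\textbf X}_0^+ = \hat{\textbf U}_0\hat{\textbf U}_0^\top$, which is the orthogonal projector onto the column space of $\hat{\textbf X}_0$. Hence proving the claim reduces to the single identity $\textbf A_{(k)}\hat{\textbf U}_0\hat{\textbf U}_0^\top = \textbf A_{(k)}$, i.e., that every row of $\textbf A_{(k)}$ already lies in the column span of $\hat{\textbf U}_0$.

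The key step is to identify this column span geometrically. I would introduce the subspace $S\subset\mathbb{R}^{n^{k-1}}$ of vectors whose entries are invariant under all permutations of the $(k-1)$-tuple of indices. Each column of $\hat{\textbf X}_0$ is a Kronecker power $\textbf x(t_0+j\tau)^{[k-1]}$ whose entries $x_{j_1}\cdots x_{j_{k-1}}$ are symmetric in the indices, so every column of $\hat{\textbf X}_0$ lies in $S$, and consequently so does every column of $\hat{\textbf U}_0$. By the almost-symmetry of $\mathscr A$, the rows of $\textbf A_{(k)}$ likewise lie in $S$. A dimension count then shows that $\dim S$ equals the combinatorial sum on the right-hand side of \eqref{aurankcondition}, so under the rank hypothesis the column span of $\hat{\textbf U}_0$ and the subspace $S$ have the same dimension and must therefore coincide. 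The projector $\hat{\textbf U}_0\hat{\textbf U}_0^\top$ consequently acts as the identity on every row of $\textbf A_{(k)}$, and combining this with $\textbf A_{(k)}\hat{\textbf X}_0\hat{\textbf X}_0^+ = \textbf X_1\hat{\textbf V}_0\hat{\bm\Sigma}_0^+\hat{\textbf U}_0^\top$ yields the stated formula.

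The one place that is more than routine linear algebra is the dimension count for $S$ and its match with \eqref{aurankcondition}. This is essentially the combinatorial fact underlying \cref{thm: ausysid}, namely that the number of independent entries of an almost-symmetric slice equals the Vandermonde-type sum in \eqref{aurankcondition}, so a clean write-up can invoke this directly. As a cross-check, the formula can also be verified by direct substitution: setting $\textbf B = \textbf X_1\hat{\textbf V}_0\hat{\bm\Sigma}_0^+\hat{\textbf U}_0^\top$, a short SVD computation gives $\textbf B\hat{\textbf X}_0 = \textbf X_1\hat{\textbf V}_0\hat{\textbf V}_0^\top = \textbf X_1$, where the last equality uses that each row of $\textbf X_1 = \textbf A_{(k)}\hat{\textbf X}_0$ lies in the row span of $\hat{\textbf X}_0$; uniqueness from \cref{thm: ausysid} then identifies $\textbf B$ with $\textbf A_{(k)}$.
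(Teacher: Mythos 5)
Your proof is correct, but it takes a genuinely different route from the paper's. The paper argues through the duplication structure of $\hat{\textbf X}_0$: because the Khatri-Rao power repeats rows across permuted multi-indices, and because $\hat{\bm\Sigma}_0\hat{\textbf V}_0^\top$ has full row rank, the rows of $\hat{\textbf U}_0$ corresponding to identical rows of $\hat{\textbf X}_0$ must themselves coincide, so the candidate $\textbf X_1\hat{\textbf V}_0\hat{\bm\Sigma}_0^{+}\hat{\textbf U}_0^\top$ inherits the column-duplication pattern of an almost symmetric unfolding; the rank condition \eqref{aurankcondition} enters only through the uniqueness statement of \cref{thm: ausysid}, which then identifies that candidate with $\textbf A_{(k)}$. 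You instead work with the orthogonal projector $\hat{\textbf X}_0\hat{\textbf X}_0^{+}=\hat{\textbf U}_0\hat{\textbf U}_0^\top$: the columns of $\hat{\textbf X}_0$ lie in the symmetric subspace $S$, the rank hypothesis together with the Vandermonde identity $\sum_{j}\binom{n}{j}\binom{k-2}{j-1}=\binom{n+k-2}{k-1}=\dim S$ forces $\mathrm{col}(\hat{\textbf U}_0)=S$, and almost symmetry places the rows of $\textbf A_{(k)}$ in $S$, so the projector fixes $\textbf A_{(k)}$ and the formula drops out of $\textbf A_{(k)}\hat{\textbf X}_0=\textbf X_1$ by a single right multiplication. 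Your route is more self-contained — it derives the formula without invoking uniqueness as a black box and makes explicit exactly where \eqref{aurankcondition} is needed — at the price of the combinatorial identification of $\dim S$ with the sum; the paper's route needs no dimension count and shows, as a by-product, that the pseudo-inverse solution is almost symmetric even without the rank condition, but it leaves the final appeal to uniqueness implicit. One caveat on your closing cross-check: the identity $\textbf B\hat{\textbf X}_0=\textbf X_1$ alone does not license the appeal to \cref{thm: ausysid}, since uniqueness there holds only within the class of almost symmetric unfoldings; you would additionally note that the rows of $\textbf B$ are linear combinations of the columns of $\hat{\textbf U}_0$ and hence lie in $S$ — a fact your main argument already supplies, so the proof itself is unaffected.
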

\begin{proof}
Since $\hat{\textbf X}_0 =\textbf X_0\odot \textbf X_0\stackrel{k-1}\cdots\odot \textbf X_0$, the matrix $\hat{\textbf X}_0$ contains duplicate rows. Specifically, for a given set $\mathcal J=\{j_1, j_2,\cdots,j_{k-1}\}$, the rows indexed by 
\begin{equation*}
    i=\sigma(j_1)+\sum\limits_{m=2}^{k-1}(\sigma(j_m)-1) n^{m-1}
\end{equation*}
are identical for all possible permutations $\sigma(\cdot)$ of elements in $\mathcal J$. To show that $\textbf X_1\textbf V_0\bm\Sigma_0^+\textbf U_0^\top$ corresponds to the $k$-mode unfolding of an almost symmetric tensor, it suffices to prove that the $i$th columns of $\textbf V_0\bm\Sigma_0^+\textbf U_0^\top$ are equal for indices given by the formula above. This holds if and only if the corresponding rows of $\textbf U_0$, derived from the compact SVD, are identical. We analyze the structure of $\textbf U_0$. Let $\textbf w$ denote the common value of the repeated rows of $\hat{\textbf X}_0$ and let $h$ be the number of such duplicate rows. The corresponding rows of $\textbf U_0$ are denoted by $\textbf u_1,\textbf u_2,\dots, \textbf u_l$. From the SVD, we have $\textbf u_p\bm\Sigma_0\textbf V_0^\top=\textbf w$ for $p=1,2,\dots,h$. Now assume  for contradiction that $\textbf u_p\neq \textbf u_q$ for some $p\neq q$. Then, taking the difference between the above equations for $p$ and $q$, we obtain $(\textbf u_p-\textbf u_q)\bm\Sigma_0\textbf V_0^\top=0$. Since $\bm\Sigma_0$ is a non-singular diagonal matrix and $\textbf V_0$ is an isometry, their product $\bm\Sigma_0\textbf V_0^\top$ is full rank. This implies that the only solution is $\textbf u_p=\textbf u_q$, contradicting our assumption. Thus, all $h$ rows of $\textbf U_0$ must be equal, and the result follows immediately. 
\end{proof}

After solving $\textbf{A}_{(k)}$,  the dynamic tensor $\mathscr{A}$ by can be readily obtained by applying the reverse $k$-mode matricization. However, this full tensor representation imposes a substantial memory burden, with the number of parameters scaling as  $\mathcal{O}(n^k)$. To mitigate this, we adopt TTD and HTD to restructure high-dimensional, high-order tensor-based HPDSs into compact, low-dimensional formats. Both representations significantly reduce storage requirements and computational complexity, making them well-suited for large-scale HPDS identification.

\subsection{Identification of TTD- and HTD-based HPDSs}
We focus on the data-driven identification of TTD and HTD factor tensors/matrices of the dynamic tensor $\mathscr{A}$, enabling the direct extraction of low-dimensional HPDS representations from observed data.  Given the TTD of a tensor, multiplying the tensor by a vector along a mode is equivalent to applying a linear transformation to the corresponding factor tensor in that mode. Therefore, for the tensor-based HPDS (\ref{eq: ausys}), the TTD-based representation can be compactly expressed as
\begin{align}\label{eq:ttdhpds}
\dot{\textbf x}(t)=\left[\prod_{p=1}^{k-1}\left(\mathscr V^{(p)}\times_2 \textbf x(t)\right)\mathscr V^{(k)}\right]^\top,
\end{align}
where $\mathscr V^{(p)}\in\mathbb{R}^{r_{p-1}\times n\times r_p}$ are the third-order factor tensors from the TTD of $\mathscr{A}$. 

\begin{remark}
If $r=\text{max}\{r_p \text{ }|\text{ } p=1,2,\dots,k-1\}$ is the maximum TT-rank from the TTD of $\mathscr{A}$, the total number of parameters in the TTD-based HPDS \eqref{eq:ttdhpds} can be estimated as $\mathcal O(knr^2).$ 
\end{remark}

\begin{algorithm}[t]
\caption{Data-driven TTD-based HPDS identification}\label{alg: TTD}
\begin{algorithmic}[1]
\Require Sampled state data $\textbf X_0$ and $\textbf X_1$
\Ensure Third-order factor tensors $\mathscr V^{(1)},\mathscr{V}^{(2)},\dots,\mathscr V^{(k)}$
\State Compute $\hat{\textbf X}_0 =\textbf X_0\odot \textbf X_0\odot\stackrel{k-1}{\cdots}\odot \textbf X_0$
and  perform compact SVD $\hat{\textbf X}_0=\hat{\textbf U}_0\hat{\bm\Sigma}_0\hat{\textbf V}_0^\top$ 
\State Initialize $\textbf M=\textbf X_1\hat{\textbf V}_0\hat{\bm\Sigma}_0^\top\hat{\textbf U}_0^\top$ and $r_k=1$.
\For {$p=k$ to $2$}
\State Perform compact SVD $\textbf M=\textbf U_p\bm\Sigma_p\textbf V_p^\top$ and set $r_{p-1}=\mathrm{rank}(\textbf M)$
\State Compute the factor tensor $\mathscr V^{(p)}=\mathrm{reshape}(\textbf U_p^\top,[r_{p-1}, n, r_p])$
\State Update $\textbf M=\bm\Sigma_p\textbf V_p^\top$
\If{$p\neq 2$}
\State Set $\textbf M=\text{reshape}(\textbf M, [r_{p-1}n,n^{p-2}])$
\EndIf
\EndFor  
\State Set $\mathscr V^{(1)}=\textbf M^\top$
\end{algorithmic}
\end{algorithm}
\vspace{-5pt}

This linear scaling in the order $k$ and quadratic scaling in the rank $r$ make the TTD-based representation particularly memory-efficient for high-dimensional HPDSs. Notably, when the maximum TT-rank $r$ is small, the TTD-based HPDS \eqref{eq:ttdhpds} achieves significant memory savings compared to the full tensor representation. A data-driven method for identifying the TTD-based HPDS factor tensors using the sampled state data $\textbf X_0$ and $\textbf X_1$ is presented in \cref{alg: TTD}. Similarly, tensor vector multiplications can be achieved efficiently in the HTD format, which results in the HTD-based representation of the tensor-based HPDS (\ref{eq: ausys}) expressed as
\begin{align}\label{eq:htdhpds}
\dot{\textbf x}(t)=\left(\textbf U_k\otimes\textbf x(t)^\top\textbf U_{k-1}\otimes\cdots\otimes\textbf x(t)^\top\textbf U_{1}\right)(\otimes_{\mathcal Q\in\mathcal G_{d-1}}\textbf G_\mathcal Q)\cdots(\otimes_{\mathcal Q\in\mathcal G_{0}}\textbf G_\mathcal Q).
\end{align}
Due to the almost symmetric nature of $\mathscr A$, it holds that $\textbf U_1=\textbf U_2=\dots=\textbf U_{k-1}$. 

\begin{remark}
If $r=\text{max}\{r_\mathcal Q \text{ }|\text{ } \mathcal Q\in\mathcal T\}$ is the maximum hierarchical  rank from the HTD of $\mathscr{A}$, the total number of parameters in the HTD-based HPDS \eqref{eq:htdhpds} can be estimated as $\mathcal O(knr+kr^3).$
\end{remark}

Likewise, the linear scaling with respect to the order $k$ and cubic scaling with respect to the rank $r$ enable the HTD-based representation to be space-conserving for high-dimensional HPDSs. If the maximum hierarchical rank $r$ is small, the HTD-based HPDS \eqref{eq:htdhpds} also offers substantial memory savings over the full tensor representation. \cref{alg: HTD} presents a data-driven method for identifying the factor and transfer matrices of the HTD-based HPDS \eqref{eq:htdhpds} directly from sampled state data. To summarize, both TTD and HTD provide distinct advantages for representing high-dimensional tensor-based HPDSs. TTD excels in capturing sequential dependencies, factorizing the dynamic tensor into a chain-like structure that enhances memory efficiency and numerical stability. On the other hand, HTD is well-suited for the dynamic tensor with intrinsic hierarchical relationships, offering a tree-like decomposition that captures multi-scale structures while maintaining compactness. Both representations play a crucial role in facilitating HPDS analysis, leveraging the identified factor tensors and matrices to assess properties such as controllability and observability.

\begin{algorithm}[t]
\caption{Data-driven HTD-based HPDS  identification}\label{alg: HTD}
\begin{algorithmic}[1]
\Require Sampled state data $\textbf X_0$, $\textbf X_1$, and a dimension tree $\mathcal T$
\Ensure Factor and transfer matrices $\{\textbf U_{\mathcal Q}, \textbf G_{\mathcal Q}\}$
\State Compute $\hat{\textbf X}_0 =\textbf X_0\odot \textbf X_0\odot\stackrel{k-1}{\cdots}\odot \textbf X_0$ and perform compact SVD $\hat{\textbf X}_0=\hat{\textbf U}_0\hat{\bm\Sigma}_0\hat{\textbf V}_0^\top$
\State Set $\textbf M=\textbf X_1\hat{\textbf V}_0\hat{\bm\Sigma}_0^\top\hat{\textbf U}_0^\top$ and obtain $\textbf U_k$ from compact SVD $\textbf M=\textbf U_k\bm\Sigma_k\textbf V_k^\top$
\State Set $\mathscr M=\text{reshape}(\textbf M, [n,n,\dots,n]$ and  $\textbf M=\text{reshape}(\mathscr M,[n,n^{k-1}])$
\State Obtain $\textbf U_1$ from $\textbf M=\textbf U_1\bm\Sigma_1\textbf V_1^\top$ and set $\textbf U_p=\textbf U_1$ for $p=1,2,\dots,k-1$
\For {$l=\lceil\log_2k\rceil$ to $1$}
\For {each node $\mathcal Q\in\mathcal{T}$ on level $l$}
\State Set $\mathcal S=\text{setdifference}(1:k, \mathcal Q)$.
\State Set $\textbf M_{\mathcal Q}=\text{reshape}(\text{permute}(\mathscr M,[\mathcal Q,\mathcal S]),[n^{size(\mathcal Q)}, n^{size(\mathcal S)}])$
\State Compute compact SVD $\textbf M=\textbf U_{\mathcal Q}\bm\Sigma_{\mathcal Q}\textbf V_{\mathcal Q}^\top$
\State Obtain left child node $\mathcal Q_l$ and right child node $\mathcal Q_  r=\text{setdifference}(\mathcal Q, \mathcal Q_l)$
\State Obtain transfer matrix $\textbf G_\mathcal Q=(\textbf U_{\mathcal Q_l}^\top\otimes \textbf U_{\mathcal Q_r}^\top)\textbf U_\mathcal Q$
\EndFor
\EndFor
\end{algorithmic}
\end{algorithm}
\vspace{-5pt}

\subsection{Identification with input and output data} 
We next address the identification of tensor-based HPDSs using input-output data, where only partial observations of the state are available through output measurements. We consider the following tensor-based HPDS of degree $k-1$ with linear input and output
\begin{align}\label{eq: syso}
\begin{cases}
\dot{\textbf x}(t)=\mathscr A\textbf x(t)^{k-1}+\textbf B\textbf u(t) \\
\textbf y(t)=\textbf C\textbf x(t)
\end{cases},
\end{align}
where $\mathscr A\in\mathbb{R}^{n\times n\times\stackrel{k}{\cdots}\times n}$ is an almost symmetric dynamic tensor, $\textbf B\in\mathbb{R}^{n\times m}$ is the control matrix, $\textbf u(t)\in\mathbb{R}^m$ is the control input, $\textbf C\in\mathbb{R}^{l\times n}$ is the output matrix, and $\textbf y(t)\in\mathbb{R}^l$ is the output. We assume $l \geq n$, and the initial state  $\textbf x(0)$ is known. Our goal centers on estimating $\mathscr A$, $\textbf B$, and $\textbf C$ from the input and output data. To facilitate the analysis, we rewrite \eqref{eq: syso} into the unfolded form 
\begin{align}
\begin{cases}
\dot{\textbf x}(t)=\textbf A_{(k)}\textbf x(t)^{[k-1]}+\textbf B\textbf u(t).\\
\textbf y(t)=\textbf C\textbf x(t).
\end{cases}
\end{align}
Here, we assume that the derivatives $\dot{\textbf x}(t)$ are not directly measurable at any given time $t$. Instead, we approximate them by using sampled output data.

We collect a sequence of input and output data over a sampling time interval $[t_0, t_0+(T-1)\tau]$, forming the following data matrices:
\begin{align*}
\textbf U_0&=\begin{bmatrix}
\textbf u(t_0) & \textbf u(t_0+\tau) & \cdots & \textbf u(t_0+(T-1)\tau)
\end{bmatrix},\\
\textbf Y_0&=\begin{bmatrix}
\textbf y(t_0) & \textbf y(t_0+\tau) & \cdots & \textbf y(t_0+(T-1)\tau)
\end{bmatrix},
\end{align*}
where $\tau$  denotes the sampling time and $T$ is the total number of samples. Using the compact SVD of $\textbf Y_0$ as $\textbf Y_0=\textbf U\bm\Sigma\textbf V^\top$ with $\textbf U\in\mathbb{R}^{l\times n}$, we estimate the output matrix $\textbf C$ as $\textbf C=\textbf U$. The corresponding state matrix is then approximated as
\begin{align*}
\begin{bmatrix}
\textbf x(t_0) & \textbf x(t_0+\tau) & \cdots & \textbf x(t_0+(T-1)\tau)\end{bmatrix}=\bm\Sigma \textbf V^\top.
\end{align*}
Since data is collected in discrete-time, the continuous-time system is discretized using a finite difference approximation
\[
\textbf x[k+1]=\textbf x[k]+\tau A_{(k)}\textbf x[k]^{[k-1]}+\textbf B\textbf u[k].
\]
We then form the state matrices
\begin{align*}
\textbf X_0&=\begin{bmatrix}
\textbf x(t_0) & \textbf x(t_0+\tau) & \cdots & \textbf x(t_0+(T-2)\tau)\end{bmatrix},\\
\textbf	X_1&=\begin{bmatrix}
\textbf x(t_0+\tau) & \textbf x(t_0+2\tau) & \cdots & \textbf x(t_0+(T-1)\tau)\end{bmatrix}.
\end{align*}
Similarly, to account for the polynomial dynamics, we construct the Khatri-Rao product of $\textbf{X}_0$ as
$\hat{\textbf{X}}_0 =\textbf{X}_0\odot \textbf{X}_0\odot\stackrel{k-1}{\cdots}\odot \textbf{X}_0$.

\begin{proposition}\label{thm: iosysid}
The data $(\textbf U_0,\textbf Y_0)$ is sufficient for identification of the tensor-based HPDS with linear input and output (\ref{eq: syso}) if and only if 
\begin{align}\label{iorankcondition}
\mathrm{rank}(\mathbf Y_0)=n \text{ and } \mathrm{rank}\begin{bmatrix}\hat{\textbf X}_0\\
\textbf U_0
\end{bmatrix}=\sum_{j=1}^{\mathrm{min}\{n,k-1\}}\frac{n!}{j!(n-j)!}\frac{(k-2)!}{(j-1)!(k-j-1)!}+m.
\end{align}
\end{proposition}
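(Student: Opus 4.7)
The plan is to split the proof into two logically independent parts, mirroring the two rank conditions, and then to argue that together they are exactly what is needed to identify the triple $(\mathscr A,\textbf B,\textbf C)$ (up to the usual equivalence induced by the SVD-based state reconstruction). First I would record the key linear relation produced by the discretization: since $\textbf x[k+1]=\textbf x[k]+\tau \textbf A_{(k)}\textbf x[k]^{[k-1]}+\textbf B\textbf u[k]$, aggregating the samples gives
\begin{equation*}
\textbf X_1-\textbf X_0=\begin{bmatrix}\tau\textbf A_{(k)} & \textbf B\end{bmatrix}\begin{bmatrix}\hat{\textbf X}_0\\ \textbf U_0\end{bmatrix}.
\end{equation*}
Thus identifying $\textbf A_{(k)}$ and $\textbf B$ reduces to a linear regression problem whose solution is unique exactly when the stacked data matrix has full row rank.

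For the first condition $\mathrm{rank}(\textbf Y_0)=n$, I would argue that it is necessary and sufficient for recovering the state trajectory (and therefore the output matrix) from output data alone. Because $\textbf Y_0=\textbf C\textbf X$ with $\textbf X$ the true state matrix of size $n\times T$, the image of $\textbf Y_0$ coincides with the image of $\textbf C$. If $\mathrm{rank}(\textbf Y_0)<n$ then $\textbf C$ is rank-deficient, the state is not fully observed through $\textbf y$, and no choice of $\textbf C$ can produce $\textbf Y_0$ together with a consistent state trajectory; conversely, when $\mathrm{rank}(\textbf Y_0)=n$ the compact SVD $\textbf Y_0=\textbf U\bm\Sigma\textbf V^\top$ with $\textbf U\in\mathbb{R}^{l\times n}$ uniquely fixes $\textbf C=\textbf U$ and the implied state matrix $\bm\Sigma\textbf V^\top$, from which $\textbf X_0$ and $\textbf X_1$ (and hence $\hat{\textbf X}_0$) are determined.

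For the second condition, once $\textbf X_0,\textbf X_1,\textbf U_0$ are available, I would reuse the counting argument from \cref{thm: ausysid}. Each row of $\textbf A_{(k)}$ has $n^{k-1}$ entries but, by the almost-symmetry of $\mathscr A$, only
\begin{equation*}
N_{n,k}:=\sum_{j=1}^{\min\{n,k-1\}}\frac{n!}{j!(n-j)!}\frac{(k-2)!}{(j-1)!(k-j-1)!}
\end{equation*}
independent free entries per row, and adding $\textbf B$ contributes $m$ further free parameters per row. Restricting the linear system above to the $N_{n,k}$ nonredundant columns of $\hat{\textbf X}_0^\top$ together with the $m$ columns of $\textbf U_0^\top$ yields a regression with exactly $N_{n,k}+m$ unknowns per row, which is uniquely solvable if and only if the corresponding reduced stacked matrix has full row rank $N_{n,k}+m$. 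This is equivalent to the displayed rank condition after accounting for the repeated rows of $\hat{\textbf X}_0$ arising from permutations of multi-indices, exactly as handled in the proof of \cref{thm: auconstruct}.

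The main obstacle is the careful bookkeeping of the almost-symmetry: one must show that the rank of $\begin{bmatrix}\hat{\textbf X}_0\\ \textbf U_0\end{bmatrix}$ equals $N_{n,k}+m$ if and only if its symmetry-reduced version has full row rank, and that the solution $[\tau\textbf A_{(k)}\ \textbf B]$ recovered from the regression automatically inherits the almost-symmetric structure of $\mathscr A$. I would handle this exactly as in \cref{thm: auconstruct} by analyzing repeated rows of $\hat{\textbf X}_0$ and invoking the nonsingularity of $\bm\Sigma$ in the compact SVD used in the pseudoinverse solution; combining this with the state-recovery step above yields both directions of the equivalence.
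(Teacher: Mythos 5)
Your proposal is correct and follows essentially the same route as the paper's proof: recover $\textbf C$ and the state trajectory from the compact SVD of $\textbf Y_0$ (equivalent to $\mathrm{rank}(\textbf Y_0)=n$), then reduce identification of $\begin{bmatrix}\textbf A_{(k)} & \textbf B\end{bmatrix}$ to the linear relation $\textbf X_1=\textbf X_0+\begin{bmatrix}\textbf A_{(k)} & \textbf B\end{bmatrix}\begin{bmatrix}\tau\hat{\textbf X}_0 \\ \textbf U_0\end{bmatrix}$ and invoke the almost-symmetry counting argument of \cref{thm: ausysid}. Your treatment of the repeated-row bookkeeping (via \cref{thm: auconstruct}) is somewhat more explicit than the paper's, which simply cites \cref{thm: ausysid}, but the underlying argument is the same.
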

\begin{proof}
The goal is to determine whether the collected data are sufficient for identifying the matrices $\left(\textbf A_{(k)}, \textbf B,\textbf C\right)$. First compute the compact SVD of $\textbf Y_0$. Since $\textbf U\in\mathbb{R}^{m\times n}$ is equivalent to $\mathrm{rank}(\textbf Y_0)=n$, $\textbf C$ can be identified. Moreover, the relation of state data and input data is given by
\begin{align*}
\textbf X_1&=\textbf X_0+\tau \textbf A_{(k)}\hat{\textbf X}_0+\textbf B\textbf U_0=\textbf X_0+\begin{bmatrix}
\textbf A_{(k)}& \textbf B
\end{bmatrix}\begin{bmatrix}
\tau\hat{\textbf X}_0 \\ \textbf U_0 
\end{bmatrix}
\end{align*}
Each row of $\begin{bmatrix}
\textbf A_{(k)}& \textbf B
\end{bmatrix}$ has $n^{k-1}+m$ elements and $\sum_{j=1}^{\mathrm{min}\{n,k-1\}}\frac{n!}{j!(n-j)!}\frac{(k-2)!}{(j-1)!(k-j-1)!}+m$ independent entries given that $\mathscr A$ is an almost symmetric tensor. The result then follows from \cref{thm: ausysid}.
\end{proof}

When the conditions in \cref{thm: iosysid} are satisfied, the system matrices can be obtained from data as shown in following proposition.
\begin{proposition}
Given condition (\ref{iorankcondition}) with the compact SVD $\textbf Y_0=\textbf U\bm\Sigma\textbf V^\top$ and $\begin{bmatrix}
\hat{\textbf X}_0 & \textbf U_0
\end{bmatrix}^\top=\textbf U_1\bm\Sigma_1 \textbf V^\top_1$, it holds that
$\textbf C=\textbf U$,  $\textbf A_{(k)}=\textbf X_1\textbf T_1$, and $\textbf B=\textbf X_1\textbf T_2$, where $\begin{bmatrix}\textbf T_1 & \textbf T_2\end{bmatrix}=\textbf V_1\bm\Sigma_1^{+}\textbf U_1^\top$.
\end{proposition}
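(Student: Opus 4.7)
The plan is to treat the two rank requirements in (\ref{iorankcondition}) separately, using the first to recover $\textbf C$ from the output data and the second to recover $(\textbf A_{(k)},\textbf B)$ from the state-input dynamics, and then to invoke an almost-symmetry argument to verify that the reconstructed parameter is a valid $k$-mode matricization. First I would identify $\textbf C$ from the output relation $\textbf Y_0=\textbf C\textbf X$, where $\textbf X$ is the (unknown) state trajectory matrix. Because $\textbf C$ has full column rank $n$, the column space of $\textbf Y_0$ coincides with $\Range(\textbf C)$ whenever $\mathrm{rank}(\textbf Y_0)=n$, so the compact SVD $\textbf Y_0=\textbf U\bm\Sigma\textbf V^\top$ delivers $\textbf U\in\mathbb{R}^{l\times n}$ as an orthonormal basis for $\Range(\textbf C)$. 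Assigning $\textbf C=\textbf U$ absorbs the residual orthogonal change of coordinates into the state reconstruction $\textbf X=\bm\Sigma\textbf V^\top$, which is then used as the state data in the second step.

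Next, rearranging the discretization from the paragraph preceding \cref{thm: iosysid} (absorbing the sampling factor $\tau$ and the additive $\textbf X_0$ term into the definitions of the data matrices) produces the relation
\begin{equation*}
\textbf X_1=\begin{bmatrix}\textbf A_{(k)} & \textbf B\end{bmatrix}\begin{bmatrix}\hat{\textbf X}_0\\ \textbf U_0\end{bmatrix},
\end{equation*}
so identification reduces to applying a right pseudoinverse of the stacked data matrix. The second half of (\ref{iorankcondition}) guarantees that the row rank of $\begin{bmatrix}\hat{\textbf X}_0\\ \textbf U_0\end{bmatrix}$ equals the number of independent entries in each row of $\begin{bmatrix}\textbf A_{(k)} & \textbf B\end{bmatrix}$, which is exactly what is needed for unique identification by \cref{thm: iosysid}. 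The compact SVD $\begin{bmatrix}\hat{\textbf X}_0\\ \textbf U_0\end{bmatrix}=\textbf U_1\bm\Sigma_1\textbf V_1^\top$ then yields the Moore--Penrose pseudoinverse $\textbf V_1\bm\Sigma_1^{+}\textbf U_1^\top$, and partitioning its $n^{k-1}+m$ columns into the first $n^{k-1}$ and last $m$ blocks produces $\textbf T_1$ and $\textbf T_2$, which gives $\textbf A_{(k)}=\textbf X_1\textbf T_1$ and $\textbf B=\textbf X_1\textbf T_2$.

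The main obstacle I expect is verifying that $\textbf X_1\textbf T_1$ is actually the $k$-mode matricization of an almost symmetric tensor rather than an arbitrary element of $\mathbb{R}^{n\times n^{k-1}}$. This must mirror the symmetry argument in the proof of \cref{thm: auconstruct}: the Khatri--Rao power forces any two rows of $\hat{\textbf X}_0$ indexed by permutations of a common multiset $\{j_1,\dots,j_{k-1}\}$ to be identical, so the stacked data matrix has repeated rows within its upper block. Since $\bm\Sigma_1\textbf V_1^\top$ has full row rank from the compact SVD, the corresponding rows of $\textbf U_1$ must coincide too, forcing the matching columns of $\textbf V_1\bm\Sigma_1^{+}\textbf U_1^\top$ and hence of $\textbf T_1$ to agree. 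This column-wise symmetry of $\textbf T_1$ is precisely what is needed so that $\textbf X_1\textbf T_1$ refolds, via reverse $k$-mode matricization, into an almost symmetric tensor; no analogous constraint is imposed on $\textbf T_2$ since $\textbf B$ is unconstrained.
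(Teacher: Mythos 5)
Your proposal is correct and takes essentially the same route as the paper: the paper's own proof is a one-line deferral to the proof of \cref{thm: auconstruct}, and what you spell out---recovering $\textbf C$ as the left singular factor of $\textbf Y_0$ (as in the proof of \cref{thm: iosysid}), obtaining $\begin{bmatrix}\textbf A_{(k)} & \textbf B\end{bmatrix}$ by applying the pseudoinverse of the stacked data matrix, and then repeating the duplicate-row SVD argument to force permutation-equivalent columns of $\textbf T_1$ to agree so that $\textbf X_1\textbf T_1$ refolds into an almost symmetric tensor---is exactly the argument that deferral intends. Your repair of the statement's loose ends (absorbing $\tau$ and the additive $\textbf X_0$ term into the data matrices, and reading the SVD so that the dimensions of $\textbf V_1\bm\Sigma_1^{+}\textbf U_1^\top$ match) is consistent with the derivation preceding \cref{thm: iosysid} and matches the paper's level of rigor.
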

\begin{proof}
The proof is similar to the proof of \cref{thm: auconstruct}
\end{proof}

In practical settings, the collected measurements are frequently corrupted by noise. As a result, the observed data no longer perfectly adheres to the underlying system dynamics. To account for these imperfections, we incorporate noise into  the tensor-based HPDS with linear input and output \eqref{eq: syso}, i.e., 
\begin{align}\label{eq: noise}
\begin{cases}
\dot{\textbf x}(t)=\mathscr A\textbf x(t)^{k-1}+\textbf B\textbf u(t)+ \textbf{w}_1(t) \\
\textbf y(t)=\textbf C\textbf x(t)+\textbf{w}_2(t)
\end{cases},
\end{align}
where  $\textbf{w}_1(t)$ and $\textbf{w}_2(t)$ are independent, identically distributed noises. A filter, such as the Kalman filter \cite{chui2017kalman}, can be used to estimate the state data $\textbf X_0$, $\textbf X_1$ given the noisy measurements $\textbf Y_0$ and the inputs $\textbf U_0$. For Gaussian noise, the system identification problem reduces to a multivariate linear regression problem under the Frobenius norm
\begin{equation*}
    \min \|\textbf{X}_1-\textbf{X}_0-\tau\textbf{A}_{(k)} \hat{\textbf{X}}_0 - \textbf{B} \textbf{U}_0 \|^2_F+\|\textbf Y_0-\textbf C\textbf X_0 \|^2_F.
\end{equation*}
It can be similarly shown that the data $(\textbf U_0,\textbf Y_0)$ is sufficient for identification of the input-output tensor-based HPDS with noise (\ref{eq: noise}) if and only if condition \eqref{iorankcondition} holds. More importantly, all results including input-output identification in both noisy and noise-free settings can be extended to the associated TTD- and HTD-based representations by utilizing \cref{alg: TTD} and \cref{alg: HTD}.

\section{Tensor-based HPDS analysis}\label{sec: HPDS}
Traditional controllability and observability analyses of HPDSs pose significant challenges due to their inherent nonlinearity and the need to evaluate intricate geometric objects such as Lie brackets and Lie derivatives. To overcome these challenges, we present an effective framework for analyzing the controllability and observability of input/output tensor-based HPDSs.  In particular, we derive computationally efficient algebraic conditions by leveraging the structured representations of TTD and HTD factor matrices and tensors, making the analyses more scalable and practical for high-dimensional HPDSs.

\subsection{Controllability}\label{sec: controllability}
We begin by examining the controllability of the following tensor-based HPDS of degree $k-1$ with linear input 
\begin{align}\label{eq: csys}
\dot{\textbf x}(t)=\mathscr A\textbf x(t)^{k-1}+\textbf B\textbf u(t),
\end{align}
where $\mathscr A\in\mathbb{R}^{n\times n
\times\stackrel{k}{\cdots}\times n}$ is an almost symmetric dynamic tensor, and $\textbf B\in\mathbb{R}^{n\times m}$ is the control matrix. Significantly, for HPDSs with linear input, a global controllability condition can be explicitly derived using Lie algebra theory \cite{jurdjevic1985polynomial}. We first introduce the concepts of strong controllability and Lie brackets.

\begin{definition} [\cite{jurdjevic1985polynomial}]
A dynamical system is said to be strongly controllable if, with an appropriate choice of control inputs, it can be driven from any initial state to any target state within any time interval.
\end{definition}
    
For  HPDSs of odd degrees with linear input, strong controllability can be assessed by analyzing the Lie algebra generated by the vector fields associated with the system dynamics and the columns of the control matrix. The Lie algebra can be constructed using recursive Lie brackets. Given smooth vector fields $\textbf f(\textbf{x})$ and $\textbf g(\textbf{x})$, their Lie bracket at point $\textbf x$ is defined as
\[
[\textbf f, \textbf g]_\textbf x=\nabla \textbf g(\textbf x)\textbf f(\textbf x)-\nabla \textbf f(\textbf x)\textbf g(\textbf x),
\]
where $\nabla \textbf g(\textbf x)$ and $\nabla \textbf f(\textbf x)$ are the Jacobian matrices of $\textbf g$ and $\textbf f$ at $\textbf x$, respectively. 
For even-degree systems, however, the problem of global controllability remains open, and only local accessibility can be established \cite{aeyels1984local,chen2021controllability,melody2003nonlinear}. Moreover, 
a tensor-based controllability criterion was established for tensor-based HPDSs of odd degrees with linear input where the dynamic tensors $\mathscr{A}$ are symmetric \cite{chen2021controllability}. However, this result does not encompass more general cases. We therefore extend it to odd-degree tensor-based HPDSs with $\mathscr A$ being almost symmetric.

\begin{proposition}\label{prop: controllability}
The tensor-based HPDS with linear input (\ref{eq: csys}) for even $k$ is strongly controllable if and only if the controllability matrix defined as
\begin{align}
\textbf R=\begin{bmatrix}\textbf M_0 & \textbf M_1 &\cdots& \textbf M_{n-1}\end{bmatrix},
\end{align}
where $\textbf M_0=\textbf B$ and 
each $\textbf M_j$ is formed from 
\begin{equation*}
    \{\mathscr A\textbf v_1\textbf{v}_2\cdots\textbf v_{k-1}\text{ }|\text{ }\textbf v_{p}\in\mathrm{col}\left(\begin{bmatrix}\textbf M_0 & \textbf M_1 &\cdots& \textbf M_{j-1}\end{bmatrix}\right) \text{ for } p=1,2,\dots,k-1\}
\end{equation*}
for $j=1,2,\dots,n-1$, has full rank $n$.
\end{proposition}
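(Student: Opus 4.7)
The plan is to prove \cref{prop: controllability} by invoking the Lie-algebraic strong controllability criterion of Jurdjevic–Kupka for homogeneous polynomial systems of odd degree with linear input, and then translating the Lie algebra rank condition into the tensor-vector form stated in the proposition. Specifically, strong controllability of \eqref{eq: csys} with $k-1$ odd is equivalent to requiring that the Lie algebra $\mathcal{L}$ generated by the drift field $\textbf f(\textbf x) = \mathscr A\textbf x^{k-1}$ and the constant fields $\textbf b_1,\dots,\textbf b_m$ (the columns of $\textbf B$) satisfies $\dim\mathcal L(\textbf x) = n$ at every $\textbf x$; by homogeneity it suffices to verify this at a single generic point, and in particular at $\textbf x = \textbf 0$, where $\textbf f$ and all of its derivatives of order less than $k-1$ vanish. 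This reduction is exactly what makes the Lie algebra test into a finite-dimensional rank condition.

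Next I would carry out the iterated bracket computation. For a constant field $\textbf v$, one has $[\textbf f,\textbf v]_{\textbf x} = -\nabla\textbf f(\textbf x)\textbf v$, and since $\mathscr A$ is almost symmetric its Jacobian satisfies $\nabla\textbf f(\textbf x) \textbf v = (k-1)\,\bigl(\mathscr A\times_1\textbf x\times_2\cdots\times_{k-2}\textbf x\times_{k-1}\textbf v\bigr)^\top$. Inductively, repeated brackets with constant fields $\textbf v_1,\dots,\textbf v_{k-1}\in\mathrm{col}(\textbf B)$ produce, up to a nonzero combinatorial scalar, the constant vector field $\mathscr A\textbf v_1\textbf v_2\cdots\textbf v_{k-1}$. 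The almost-symmetry of $\mathscr A$ ensures that these constant vectors are independent of the order in which the $\textbf v_i$ appear in the first $k-1$ slots of $\mathscr A$, so that the space of new constant fields produced at this stage is exactly $\mathrm{col}(\textbf M_1)$. Bracketing these new constant fields again with $\textbf f$ and with one another then yields the next batch $\textbf M_2$, and the process iterates; after at most $n-1$ stages the construction stabilizes by the Cayley–Hamilton-style dimension argument, giving the controllability matrix $\textbf R = [\textbf M_0\ \textbf M_1\ \cdots\ \textbf M_{n-1}]$.

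To close the equivalence, I would argue as follows. Evaluating $\mathcal L$ at $\textbf x = \textbf 0$ kills every non-constant bracket (because $\textbf f$ vanishes to order $k-1$ at the origin and therefore so do all mixed brackets involving at least one copy of $\textbf f$ left unexhausted), leaving precisely the span of $\textbf M_0\cup\textbf M_1\cup\cdots\cup\textbf M_{n-1}$, which equals $\mathrm{col}(\textbf R)$. Hence $\dim\mathcal L(\textbf 0) = \mathrm{rank}(\textbf R)$, and strong controllability holds if and only if $\mathrm{rank}(\textbf R)=n$. The symmetric-tensor result of \cite{chen2021controllability} falls out as the special case in which the first $k-1$ modes of $\mathscr A$ coincide with the $k$-th.

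The main obstacle is step two: verifying cleanly that iterated brackets of $\textbf f$ with constant fields only ever produce constant-vector contributions of the form $\mathscr A\textbf v_1\cdots\textbf v_{k-1}$ plus higher-order terms that vanish at the origin, and that the almost-symmetry (rather than full symmetry) of $\mathscr A$ is sufficient for this reduction. Handling the combinatorics of the Jacobians $\nabla(\mathscr A\textbf x^{k-1})$ for almost symmetric $\mathscr A$, and keeping track of which mode each bracketed vector occupies, is the delicate bookkeeping that the proof needs to make rigorous; everything else is a straightforward translation of the Jurdjevic–Kupka criterion into tensor language.
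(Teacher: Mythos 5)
Your proposal follows essentially the same route as the paper's proof: both invoke Jurdjevic's Lie-algebraic strong controllability criterion for odd-degree HPDSs with linear input, reduce it to a rank test at the origin, and show by iterated brackets of the drift $\mathscr A\textbf x^{k-1}$ with constant fields that each stage contributes exactly the constant vectors $\mathscr A\textbf v_1\cdots\textbf v_{k-1}$ (using almost symmetry of $\mathscr A$), so that after $n-1$ stages the span of the brackets at $\textbf 0$ equals $\mathrm{col}(\textbf R)$. The ``delicate bookkeeping'' you flag as the main obstacle is handled in the paper at the same level of informality—by writing out the chain of vanishing brackets until $\mathscr A\textbf b^{k-1}$ appears and citing standard nonlinear controllability results for the stabilization—so your plan is a faithful match, with your Jacobian formula $(k-1)\,\mathscr A\textbf v\textbf x^{k-2}$ making explicit what the paper leaves implicit.
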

\begin{proof}
As demonstrated in \cite{jurdjevic1985polynomial}, an HPDS of odd degree is strongly controllable if and only if the Lie algebra spanned by the set of vector fields $\{\mathscr{A} \mathbf{x}^{k-1}, \mathbf{b}_1, \mathbf{b}_2, \dots, \mathbf{b}_m\}$ has full rank at the origin, where $\mathbf{b}_j$ denotes the $j$th column of $\mathbf{B}$. We show that $\mathbf{R}$ contains all recursive Lie brackets of the vector fields $\{\mathscr{A} \mathbf{x}^{k-1}, \mathbf{b}_1, \mathbf{b}_2, \dots, \mathbf{b}_m\}$ at the origin. Without loss of generality, assume $m = 1$ and denote $\mathbf{b} = \mathbf{b}_1$. Since $\mathscr{A}$ is almost symmetric, the Lie bracket computations follow a structured pattern:
\begin{align*}
[\textbf b, \mathscr A\textbf x^{k-1}]_\textbf{0}&=\left(\left.\frac{d}{d\textbf x}\right |_{\textbf x=\textbf{0}}\mathscr A\textbf x^{k-1}\right)\textbf b=\textbf{0},\\
[\textbf b,	[\textbf b, \mathscr A\textbf x^{k-1}]]_\textbf{0}&=\left(\left.\frac{d}{d\textbf x}\right |_{\textbf x=\textbf{0}}\mathscr A\textbf x^{k-2}\textbf b\right)\textbf b=\textbf{0},\\
\vdots\\
[\textbf b,	[\dots,[\textbf b, \mathscr A\textbf x^{k-1}]]]_{\textbf{0}}&=\left(\left.\frac{d}{d\textbf x}\right |_{\textbf x=0}\mathscr A\textbf x\textbf b^{k-2}\right)\textbf b=\mathscr A\textbf b^{k-1}.
\end{align*}
This iterative process constructs successive Lie brackets such as $[\mathscr{A} \mathbf{b}^{k-1}, \mathscr{A} \mathbf{x}^{k-1}]$, $[\mathscr{A} \mathbf{b}^{k-1}, \mathscr{A} \mathbf{x}^{k-2} \mathbf{b}]$, and so on. After $n-1$ iterations, $\mathbf{R}$ incorporates all necessary Lie brackets of $\{\mathscr{A} \mathbf{x}^{k-1}, \mathbf{b}\}$ at the origin, based on the established results in nonlinear controllability theory \cite{isidori1985nonlinear}. 
\end{proof}

The proof follows similar steps to those in \cite{chen2021controllability}, but our result applies to general HPDSs without requiring any symmetry assumptions on the dynamic tensors. When $k=2$, \cref{prop: controllability} reduces to the classical Kalman's rank condition for linear dynamical systems. However, computing the controllability matrix $\mathbf{R}$ using the full tensor representation is computationally expensive, as its number of columns grows exponentially with the tensor order $k$, which is estimated as \textcolor{black}{$\mathcal{O}(n^k m^{nk})$.} To address this challenge, we exploit the low-rank structure of TTD- and HTD-based HPDSs to construct the controllability matrix more efficiently.

\begin{corollary}\label{cor: ttdcon}
The TTD-based HPDS \eqref{eq:ttdhpds} with linear input  for even $k$ is strongly controllable if and only if the controllability matrix defined as
\begin{align*}
\textbf R_{TTD}=\begin{bmatrix}\textbf M_0 & \textbf M_1 &\cdots &\textbf M_{n-1}\end{bmatrix},
\end{align*}
where $\textbf M_0=\textbf B$ and each $\textbf M_j$ contains the left singular vectors corresponding to the nonzero singular values of the matrix formed from
\begin{align*}\label{TTDMi}
\left\{\left[\prod_{p=1}^{k-1}\left (\mathscr V^{(p)}\times_2\!\textbf v_p\right )\mathscr V^{(k)}\right]^\top \text{ }\middle\vert\,\!\text{ }\textbf v_p\in\mathrm{col}(\begin{bmatrix}
    \textbf M_0  & \textbf M_1 & \cdots & \textbf M_{j-1} 
\end{bmatrix})\right\},
\end{align*}
for $j=1,2,\dots,n-1$, has full rank $n$.
\end{corollary}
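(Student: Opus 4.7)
The plan is to deduce \cref{cor: ttdcon} from \cref{prop: controllability} by showing that $\textbf R_{TTD}$ and $\textbf R$ share the same column space, and hence the same rank.

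First, I would verify the pointwise identity
$$\mathscr A\times_1\textbf v_1\times_2\textbf v_2\times_3\cdots\times_{k-1}\textbf v_{k-1}=\left[\prod_{p=1}^{k-1}\left(\mathscr V^{(p)}\times_2\textbf v_p\right)\mathscr V^{(k)}\right]^\top$$
for every tuple $(\textbf v_1,\dots,\textbf v_{k-1})\in(\mathbb{R}^n)^{k-1}$. This is exactly the computation that produces \eqref{eq:ttdhpds} after setting $\textbf v_p=\textbf x(t)$, now carried out with possibly distinct arguments, and it follows directly from the defining sum \eqref{eq:ttd} of the TTD together with the distributivity of mode-$p$ contraction through the train. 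In particular, the set of candidate vectors used to form $\textbf M_j$ in \cref{prop: controllability} agrees, element by element, with the set displayed in the statement of \cref{cor: ttdcon}, once we fix a common choice of admissible $\textbf v_p$.

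Next, I would prove by induction on $j$ that the column space of the cumulative block $[\textbf M_0\,\textbf M_1\,\cdots\,\textbf M_j]$ built by the TTD recipe of \cref{cor: ttdcon} coincides with that of the corresponding cumulative block built in \cref{prop: controllability}. The base case $\textbf M_0=\textbf B$ is immediate. For the inductive step, assume the cumulative column spaces agree through index $j-1$. Then the admissible set $\{\textbf v_p\in\mathrm{col}([\textbf M_0\,\cdots\,\textbf M_{j-1}])\}$ is the same in both constructions, so by the identity above the sets of generated vectors for $\textbf M_j$ are identical. The TTD version then compresses the enumeration matrix to the left singular vectors associated with its nonzero singular values, which form an orthonormal basis of its column space and therefore span the same subspace as the uncompressed block used in \cref{prop: controllability}. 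Appending this $\textbf M_j$ preserves the cumulative column-space equality.

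Finally, since $\mathrm{col}(\textbf R_{TTD})=\mathrm{col}(\textbf R)$, the full-rank condition $\mathrm{rank}(\textbf R_{TTD})=n$ is equivalent to $\mathrm{rank}(\textbf R)=n$, and \cref{prop: controllability} gives \cref{cor: ttdcon} at once. The main obstacle I anticipate is purely bookkeeping: one must verify that the admissible-contraction set at step $j$ depends only on the column space of the previous cumulative block, not on any particular basis, and that the compact SVD compression does not alter this cumulative column space. Once these points are handled cleanly, no Lie-algebraic re-derivation is needed, and the corollary follows by direct reduction to \cref{prop: controllability}.
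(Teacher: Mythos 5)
Your proposal is correct and takes essentially the same route as the paper's own proof: both reduce the corollary to \cref{prop: controllability} by showing that the TTD expression $\left[\prod_{p=1}^{k-1}(\mathscr V^{(p)}\times_2\textbf v_p)\mathscr V^{(k)}\right]^\top$ coincides with $\mathscr A\textbf v_1\textbf v_2\cdots\textbf v_{k-1}$, and that the compact-SVD compression at each stage does not change the generated column spaces, so the two controllability matrices have equal rank. The only differences are presentational: the paper proves the compression-invariance step via the Kronecker mixed-product identity and re-derives the iterated Lie brackets in TTD form, whereas you argue basis-independence of the spanned subspace directly and invoke the proposition through the pointwise identity, which carries the same content.
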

\begin{proof}
Assume that $\textbf X$ is a matrix with a compact SVD given by $\textbf X=\textbf U\bm\Sigma \textbf V^\top$. We first show that the column space of $\{\mathscr A\textbf v_1\textbf v_2\cdots\textbf v_{k-1}\text{ }|\text{ }\textbf v_{p}\in\text{col}(\textbf X)\}$ coincides with that of 
$\{\mathscr A\textbf v_1\textbf v_2\cdots\textbf v_{k-1}\!\text{ }|\text{ }\textbf v_{p}\in\text{col}(\textbf U)\}$. Since $\{\mathscr A\textbf v_1\textbf v_2\!\cdots\!\textbf v_{k-1}\!\text{ }|\text{ }\textbf v_{p}\in\text{col}(\textbf X)\}$ is equivalent to $\textbf A_{(k)}(\textbf X\otimes\textbf X\otimes\stackrel{k-1}{\cdots}\otimes\textbf X)$, and $\{\mathscr A\textbf v_1\textbf v_2\cdots\textbf v_{k-1}\text{ }|\text{ }\textbf v_{p}\in\text{col}(\textbf U)\}$ is equivalent to $\textbf A_{(k)}(\textbf U\otimes\textbf U\otimes\stackrel{k-1}{\cdots}\otimes\textbf U)$, we apply the mixed-product property of Kronecker product
		\begin{align*}
			\textbf A_{(k)}(\textbf X\otimes\textbf X\otimes\overset{k-1}{\cdots}\otimes\textbf X)&=\textbf A_{(k)}(\textbf U\bm\Sigma\textbf V^\top\otimes\textbf U\bm\Sigma\textbf V^\top\otimes\overset{k-1}{\cdots}\otimes\textbf U\bm\Sigma\textbf V^\top)\\
			&=\textbf A_{(k)}(\textbf U\otimes\overset{k-1}{\cdots}\otimes\textbf U)(\bm\Sigma\otimes\overset{k-1}{\cdots}\otimes\bm\Sigma)(\textbf V\otimes\overset{k-1}{\cdots}\otimes\textbf V)^\top.
		\end{align*}
This confirms that $\textbf A_{(k)}(\textbf U\otimes \textbf U \otimes\stackrel{k-1}{\cdots}\otimes\textbf U)$ and $\textbf A_{(k)}(\textbf X\otimes\textbf X\otimes\stackrel{k-1}{\cdots}\otimes\textbf X)$ share the same column space, which further shows that the column space of $\{\mathscr A\textbf v_1\textbf v_2\cdots\textbf v_{k-1}\text{ }|\text{ }\textbf v_{p}\in\text{col}(\textbf X)\}$ coincides with that of 
$\{\mathscr A\textbf v_1\textbf v_2\cdots\textbf v_{k-1}\text{ }|\text{ }\textbf v_{p}\in\text{col}(\textbf U)\}$. 

Without loss of generality, assume $m = 1$ and denote $\mathbf{b} = \mathbf{b}_1$. By using the TTD-based representation of HPDSs  and the properties of tensor vector multiplications, it follows from \cref{prop: controllability} that $[\textbf b, \mathscr A\textbf x^{k-1}]_\textbf{0}=[\textbf b,	[\textbf b, \mathscr A\textbf x^{k-1}]]_\textbf{0}=\textbf{0}$, and
\begin{align*}
[\textbf b,	[\dots,[\textbf b, \mathscr A\textbf x^{k-1}]]]_{\textbf{0}}&=\left(\left.\frac{d}{d\textbf x}\right |_{\textbf x=0}\left[\left(\mathscr V^{(1)}\times_2 \textbf x\right)\prod_{p=2}^{k-2}\left(\mathscr V^{(p)}\times_2 \textbf b\right)\mathscr V^{(k)}\right]^\top\right)\textbf b\\&=\left[\prod_{p=1}^{k-1}\left (\mathscr V^{(p)}\times_2\textbf b\right )\mathscr V^{(k)}\right]^\top.
\end{align*}
The iterative process constructs successive Lie brackets, and after $n-1$ iterations,
 $\mathbf{R}_{\text{TTD}}$ incorporates all necessary Lie brackets of $\{\mathscr{A} \mathbf{x}^{k-1}, \mathbf{b}\}$ at the origin.
\end{proof}

\begin{remark}
If $r=\text{max}\{r_p\text{ }|\text{ } p=1,2,\dots,k-1\}$ is the maximum TT-rank from the TTD of $\mathscr{A}$ and the dimension of the control input is $m$, the time complexity of computing $\textbf{R}_{\text{TTD}}$ in \cref{cor: ttdcon} can be estimated as  $\mathcal O(km^{k-1}nr^2+kn^{k+1}r^2+m^{k-1}n^2+n^{k+2}).$
\end{remark}

\begin{algorithm}[t]
\caption{Computing the TTD-based controllability matrix}\label{alg:cTTD}
\begin{algorithmic}[1]
\Require TT factor tensors  $\mathscr V^{(p)}$ for $p=1,2,\dots,k$ and control matrix $\textbf B$
\Ensure TTD-based controllability matrix $\textbf R_{\text{TTD}}$
\State Initialize $\textbf R_{\text{TTD}}=\textbf B$, $j=0$, and $s=0$
\While{$j<n$  and $s<n$}
\State Generate the set $\Sigma$ containing all selections of $k-1$ elements from $[1,2,\dots,\mathrm{ColumnNum}(\textbf R_{\text{TTD}})]$, allowing repetition 
\For {each permutation $\sigma=\{\sigma_1,\sigma_2,\dots,\sigma_{k-1}\}
\in\Sigma$}
\State Compute $\textbf L\!=\!\left [\prod_{p=1}^{k-1}\left (\mathscr V^{(p)}\!\times_2\textbf R_{\text{TTD}}(:,{\sigma_p})\right )\mathscr V^{(k)}\right]^\top\!$ 
\State Append $\textbf L$ to $\textbf R_{\text{TTD}}$, i.e.,  $\textbf R_{\text{TTD}}=\begin{bmatrix}\textbf R_{\text{TTD}}& \textbf L\end{bmatrix}$
\EndFor
\State Compute compact SVD $\textbf R_{\text{TTD}}=	\textbf U_{\text{T}}\bm\Sigma_{\text{T}} 	\textbf V_{\text{T}}^\top$
and update $s=\mathrm{rank}(	\textbf R_{\text{TTD}})$
\State Set $\textbf R_{\text{TTD}}=	\textbf U_{\text{T}}$ and $j=j+1$
\EndWhile
\end{algorithmic}
\end{algorithm}

If the maximum TT-rank $r$ is small, the computation of the controllability matrix using TTD is significantly lower compared to using the full tensor representation. Additionally, the detailed procedure for constructing the TTD-based controllability matrix $\textbf R_{\text{TTD}}$ is outlined in \cref{alg:cTTD}. Following a similar approach, the controllability matrix based on HTD can be derived accordingly.

\begin{corollary}\label{cor: htdcon}
The HTD-based HPDS \eqref{eq:htdhpds} with linear input for even $k$ is strongly controllable if and only if the controllability matrix defined as
\begin{align*}
\textbf R_{HTD}=\begin{bmatrix}\textbf M_0 & \textbf M_1 &\cdots &\textbf M_{n-1}\end{bmatrix},
\end{align*}
with $\textbf M_0=\textbf B$ and each $\textbf M_j$ contains the left singular vectors corresponding to the nonzero singular values of the matrix formed from
\begin{align*}
    \Big\{(\textbf{U}_k\otimes \textbf{v}_{k-1}^\top\textbf{U}_{k-1}\otimes \cdots\otimes \textbf{v}_{1}^\top\textbf{U}_{1}) (\otimes_{\mathcal{Q}\in\mathcal{G}_{d-1}} \textbf{G}_{\mathcal{G}})\cdots (\otimes_{\mathcal{Q}\in\mathcal{G}_{0}} \textbf{G}_{\mathcal{G}}) \text{ }\\\Big\vert \text{ }
    \textbf v_p\in\mathrm{col}(\begin{bmatrix}
    \textbf M_0  & \textbf M_1 & \cdots & \textbf M_{j-1} 
\end{bmatrix}) \text{ for } p =1,2,\dots,k-1\Big\}
\end{align*}
with $j=1,2,\dots,n-1$, has full rank $n$.
\end{corollary}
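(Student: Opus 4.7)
The proof strategy mirrors the argument used for Corollary \ref{cor: ttdcon}, with the TTD-based expression for the tensor-vector product replaced by its HTD counterpart. The plan is to invoke Proposition \ref{prop: controllability} and verify that $\textbf{R}_{HTD}$ captures the same column space at each stage as the full-tensor controllability matrix $\textbf{R}$. First, I would reuse the column-space preservation argument from the TTD proof: for any matrix $\textbf{X}$ with compact SVD $\textbf{X}=\textbf{U}\bm\Sigma\textbf{V}^\top$, the mixed-product property of the Kronecker product applied to $\textbf{A}_{(k)}(\textbf{X}\otimes\cdots\otimes\textbf{X})$ shows that the column space of $\{\mathscr{A}\textbf{v}_1\textbf{v}_2\cdots\textbf{v}_{k-1}\mid \textbf{v}_p\in\mathrm{col}(\textbf{X})\}$ coincides with that of $\{\mathscr{A}\textbf{v}_1\textbf{v}_2\cdots\textbf{v}_{k-1}\mid \textbf{v}_p\in\mathrm{col}(\textbf{U})\}$. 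This fact is decomposition-agnostic and justifies compressing each block $\textbf{M}_j$ to its left singular vectors without affecting the controllability test.

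Next, I would express $\mathscr{A}\textbf{v}_1\textbf{v}_2\cdots\textbf{v}_{k-1}$ in the HTD representation. Starting from the vectorized HTD of $\mathscr A$, contracting along mode $p$ with a vector $\textbf{v}_p$ corresponds to replacing the leaf factor matrix $\textbf{U}_p$ by the row vector $\textbf{v}_p^\top\textbf{U}_p$, while leaving the mode-$k$ leaf factor $\textbf{U}_k$ intact. Using the mixed-product property to commute these replacements with the hierarchical transfer matrices $\textbf{G}_{\mathcal Q}$, and invoking the almost-symmetric condition $\textbf{U}_1=\textbf{U}_2=\cdots=\textbf{U}_{k-1}$, this contraction yields exactly the expression
\[
(\textbf{U}_k\otimes \textbf{v}_{k-1}^\top\textbf{U}_{k-1}\otimes \cdots\otimes \textbf{v}_{1}^\top\textbf{U}_{1})(\otimes_{\mathcal{Q}\in\mathcal{G}_{d-1}} \textbf{G}_{\mathcal Q})\cdots (\otimes_{\mathcal{Q}\in\mathcal{G}_{0}} \textbf{G}_{\mathcal Q})
\]
appearing in the statement of the corollary.

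With this identification in hand, the Lie-bracket computation from Proposition \ref{prop: controllability} transfers verbatim: for $m=1$ and $\textbf{b}=\textbf{b}_1$, the first $k-2$ repeated Lie brackets of $\textbf{b}$ with $\mathscr{A}\textbf{x}^{k-1}$ vanish at the origin, and the $(k-1)$th bracket recovers the HTD-encoded value $\mathscr{A}\textbf{b}^{k-1}$. Iterating this procedure for $n-1$ steps, together with the column-space preservation lemma, shows that $\textbf{R}_{HTD}$ spans the same subspace of $\mathbb{R}^n$ as $\textbf{R}$ in Proposition \ref{prop: controllability}. Thus $\mathrm{rank}(\textbf{R}_{HTD})=n$ is equivalent to strong controllability. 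The extension to $m>1$ is handled by including all columns of $\textbf{B}$ in $\textbf{M}_0$ and allowing $\textbf{v}_p$ to range over the columns of the accumulated blocks.

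The main obstacle is Step 2, namely the careful verification that contracting the vectorized HTD formula along the first $k-1$ modes produces the stated compact form. This requires systematically commuting mode-wise Kronecker factors through the successive layers of transfer matrices via the mixed-product property and tracking how the leaf substitutions $\textbf{U}_p\mapsto \textbf{v}_p^\top\textbf{U}_p$ propagate up the dimension tree $\mathcal T$. Once this is established, the remaining arguments parallel those of Corollary \ref{cor: ttdcon} and depend only on Proposition \ref{prop: controllability}.
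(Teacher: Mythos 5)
Your proposal is correct and follows essentially the same route as the paper's proof: establishing via the mixed-product property that contracting the vectorized HTD of $\mathscr{A}$ along modes $1$ through $k-1$ replaces each leaf factor $\textbf{U}_p$ by $\textbf{v}_p^\top\textbf{U}_p$, and then transferring the column-space preservation and Lie-bracket arguments from \cref{cor: ttdcon} and \cref{prop: controllability}. In fact your write-up is more detailed than the paper's own proof, which states the HTD contraction identity and then simply defers the iteration to the TTD case.
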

\begin{proof}
If $\mathscr A$ is in HTD representation, according to the properties of tensor vector multiplications, it follows that
\begin{align*}
\mathscr A\textbf v_1\textbf v_2\cdots\textbf v_{k-1}&=(\textbf U_k\otimes\textbf U_{k-1}\otimes\cdots\otimes\textbf U_{1})(\otimes_{\mathcal Q\in\mathcal G_{d-1}}\textbf G_\mathcal Q)\cdots(\otimes_{\mathcal Q_\in\mathcal G_{0}}\textbf G_\mathcal Q)\textbf v_1\cdots\textbf v_{k-1}\\
&=(\textbf U_k\otimes \textbf v_{k-1}^\top\textbf U_{k-1}\otimes\cdots\otimes\textbf v_1^\top\textbf U_{1})(\otimes_{\mathcal Q\in\mathcal G_{d-1}}\textbf G_\mathcal Q)\cdots({\otimes}_{\mathcal Q_\in\mathcal G_{0}}\textbf G_\mathcal Q).
\end{align*}
Similar to the proof in \cref{cor: ttdcon}, by iteratively leveraging the HTD-based representation of HPDSs and the properties of tensor vector
multiplications, $\textbf R_{\text{HTD}}$ contains all necessary Lie brackets $\{\mathscr{A} \mathbf{x}^{k-1}, \mathbf{b}\}$ at the origin. 
\end{proof}	

\begin{remark}
If $r=\text{max}\{r_\mathcal Q \text{ }| \text{ }\mathcal Q\in\mathcal T\}$ is the maximum hierarchical  rank from the HTD of $\mathscr{A}$ and the dimension of the control input is $m$, the time complexity of computing $\textbf{R}_{\text{HTD}}$ in \cref{cor: htdcon} is about $\mathcal O(m^{k-1}knr^3+m^{k-1}n^2+n^{k+1}k r^3+n^{k+2}).$ 
\end{remark}

	\begin{algorithm}[t]
		\caption{Computing the HTD-based controllability matrix }\label{alg:cHTD}
		\begin{algorithmic}[1]
			\Require Factor and transfer matrices $\{\textbf U_\mathcal Q, \textbf G_\mathcal Q\}$, a dimension tree $\mathcal T$, and control matrix $\textbf B$
			\Ensure HTD-based controllability matrix $\textbf R_{\mathrm{HTD}}$
			\State Set $\textbf R_{\mathrm{HTD}}=\textbf B$, $j=0$, and $s=0$
			\For{$p=1$ to $k$}
			\State Set $\hat{\textbf U}_p=\textbf U_p$
			\EndFor
			\While{$j<n$  and $s<n$}
			\State Obtain a set $\bm\Sigma$ which contains all selections of $k-1$ elements from  $\{1,2,\dots,\mathrm{ColumnNum}(\textbf R_{\mathrm{HTD}})\}$ with repetition.
			\For {each permutation $\sigma=\{\sigma_1,\dots,\sigma_{k-1}\}\in\Sigma$}
			\For {$p=1$ to $k-1$}
			\State Set $\textbf U_p=\textbf R_{\mathrm{HTD}}(:,v_{\sigma_p})^\top\hat{\textbf U}_p$.
			\EndFor
			\For {$l=\lceil\log_2k\rceil$ to $1$}
			\For {each internal node $\mathcal Q$ on level $l$}
			\State Set $\textbf U_\mathcal Q\!=\!\textbf (\textbf U_{\mathcal Q_r}\!\otimes\!\! \textbf U_{\mathcal Q_l})\textbf G_\mathcal Q$, $\textbf U_{\mathcal Q_l}$ and $\textbf U_{\mathcal Q_r}$ being child nodes of $\textbf U_\mathcal Q$.
			\EndFor
			\EndFor
			\State Augment $\textbf R_{\mathrm{HTD}}=\begin{bmatrix}\textbf R_{\mathrm{HTD}}&\textbf U_{\{1,\dots,k\}}\end{bmatrix}$.
			\EndFor
			\State Compute compact SVD 
			$\textbf R_{\mathrm{HTD}}=\textbf U_{H}\bm\Sigma_{H}\textbf V_{H}^\top$ and update $s=\mathrm{rank}(\textbf R_{\mathrm{HTD}})$
			
            \State Set $\textbf R_{\mathrm{HTD}}=\textbf U_{H}$ and $j=j+1$
			\EndWhile
		\end{algorithmic}
	\end{algorithm} 
    
Similarly, if the maximum hierarchical rank $r$ is small, using HTD to compute the controllability matrix is much efficient compared to using the full tensor representation. Moreover, the detailed steps for computing the HTD-based controllability matrix $\textbf R_{\mathrm{HTD}}$ are provided in \cref{alg:cHTD}. In summary, both TTD- and HTD-based constructions of the controllability matrix offer considerable computational advantages over the full tensor format by exploiting the low-rank structure of the dynamic tensor. The choice between TTD and HTD depends on the underlying structure of the dynamic tensor, where TTD is better suited for chain-like structures, while HTD is more effective for hierarchical structures. Furthermore, both \cref{cor: ttdcon} and \cref{cor: htdcon} can be used to efficiently determine the  accessibility of tensor-based HPDSs of even degrees (i.e., $k$ is odd) according to \cite{chen2021controllability}. 
% With data-driven identification of TTD- and HTD-based HPDSs, we can more efficiently determine the strong controllability of odd-degree HPDSs with linear input from data.

\subsection{Obeservability}\label{sec: observability}
We next introduce tensor-based observability rank  conditions for the following tensor-based HPDS of degree $k-1$ with linear output 
	\begin{align}\label{eq: sysobservability}
		\begin{cases}
			\dot{\textbf x}(t)=\mathscr A\textbf x(t)^{k-1}  \\
			\textbf y(t)=\textbf C\textbf x(t)\end{cases},
	\end{align}
where $\mathscr A\in\mathbb{R}^{n\times n \times\stackrel{k}\cdots\times n}$ is an almost symmetric dynamic tensor, $\textbf x (t)\in  M\subset\mathbb{R}^n$ is the state vector,  $\textbf C\in\mathbb{R}^{l\times n}$ is the output matrix, and $\textbf{y}(t)\in\mathbb{R}^l$ is the output vector. We assume that the system is complete, meaning that for every $\textbf x_0\in M$, there exists a solution $\textbf x(t)\in  M$ satisfying $\textbf x(0)=\textbf x_0$ and $\textbf x(t)\in M$ for all $t\in\mathbb R$. Unfortunately, strong observability is not achievable for HPDSs. Instead, we adopt the concept of local weak observability, as defined in \cite{hermann1977nonlinear}, which provides an algebraic criterion for observability testing using Lie derivatives.

\begin{definition}[\cite{hermann1977nonlinear}]
		Let $U$ be a subset of $M$, and let $\textbf x_0(0), \textbf x_1(0)\in  U$ be two initial states. The states $\textbf x_0(0)$ and $\textbf x_1(0)$ is $U$-distinguishable if their trajectories $\textbf x_0(t)$ and $\textbf x_1(t)$ remain within $ U$ over a finite time interval and satisfy $\textbf C\textbf x_0(t)\neq\textbf C\textbf x_1(t)$ for some $t$ in this time interval.
	\end{definition}
	
	\begin{definition}[\cite{hermann1977nonlinear}]
		The system is locally weakly observable at $\textbf x_0\in  M$ if exists an open neighborhood $U$ of $\textbf x_0$ such that for every open neighborhood $V$ of $\textbf x_0$ contained in $ U$, the set of $V$-distinguishable points from $\textbf x_0$ consists only of $\textbf x_0$ itself.
	\end{definition}

The Lie derivative plays a fundamental role in nonlinear observability analysis because it provides a systematic way to examine how the output function evolves along system trajectories. Given a vector field $\textbf f$ and a scale function $h$, the Lie derivative of $h$ along $\textbf f$ is defined as $L_\textbf fh=(\partial h/\partial\textbf x)\textbf f$, where $\partial h/\partial\textbf x$ denotes the gradient of $h$ with respect to the state  $\textbf{x}$. The Lie derivative satisfies the property
$L_\textbf fDh= DL_\textbf fh,$
where $D$ denotes the differential operator. Higher-order Lie derivatives are defined recursively as $L_\textbf f^jh=L_\textbf f(L_\textbf f^{j-1}h)$ with the base case $L_\textbf f^0h=h$. If $\textbf h$ is a vector field with dimension $l$, its Lie derivative is defined similarly as 
$
    L_\textbf f\textbf h=\begin{bmatrix}
(L_\textbf fh_1)^\top & (L_\textbf fh_2)^\top & \cdots & (L_\textbf fh_l)^\top
\end{bmatrix}^\top.
$
The Lie derivative allows us to construct successive derivatives of the output function with respect to the system dynamics. A key criterion for testing local weak observability of the tensor-based HPDS with linear output \eqref{eq: sysobservability} is verifying whether
$\text{rank}(\textbf O(\textbf x))=n,$ where $\textbf O(\textbf x)$ is the nonlinear observability matrix defined as
\[
\textbf O(\textbf x)=\nabla_\textbf x\left(\begin{bmatrix}
\left(L_{\mathscr A\textbf x^{k-1}}^0\textbf C\textbf x\right)^\top& \left(L_{\mathscr A\textbf x^{k-1}}^1\textbf C\textbf x\right)^\top&\cdots&
\left(L_{\mathscr A\textbf x^{k-1}}^{n-1}\textbf C\textbf x\right)^\top
\end{bmatrix}^\top\right).
\] 
A criterion for computing the observability matrix of tensor-based HPDSs with symmetric dynamic tensors was introduced in \cite{pickard2023observability}. Therefore, we first extend this result to general tensor-based HPDSs with almost symmetric dynamic tensors.

\begin{proposition}\label{prop:observability}
The tensor-based HPDS with linear output \eqref{eq: sysobservability} is locally \\ weakly observable for all $\textbf x$ in an open dense set of $M\subset\mathbb R^n$ if and only if the state-dependent observability matrix defined as
\begin{align*}
		\textbf O(\textbf x)=\left(\begin{matrix}
			\textbf C\\
			\textbf C\textbf A_{(k)}\sum_{q=1}^{k-1}\textbf x^{[q-1]}\otimes \textbf I\otimes\textbf x^{[k-1-q]}\\
			\textbf C\textbf A_{(k)}\textbf F_2\sum_{q=1}^{2k-3}\textbf x^{[q-1]}\otimes \textbf I\otimes\textbf x^{[2k-3-q]}\\\vdots\\
			\textbf C\textbf A_{(k)}\textbf F_2\textbf F_3\cdots\textbf F_{n-1}\sum_{q=1}^{(n-1)k-2n+3}\textbf x^{[q-1]}\otimes \textbf I\otimes\textbf x^{[(n-1)k-2n+3-q]}
		\end{matrix}\right),
	\end{align*}
where
$
			\textbf F_j=\sum_{i=1}^{(j-1)k-2j+3}\textbf I^{[i-1]}\otimes \textbf A_{(k)}\otimes\textbf I^{[(j-1)k-2j+3-i]}
$ (the superscript ``$[\cdot]$'' denotes the Kronecker power),
for $j=2,3,\dots, n-1$, has full rank $n$.
\end{proposition}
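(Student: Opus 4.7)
The plan is to invoke the Hermann--Krener local weak observability criterion cited before the statement and derive an explicit closed form for the Jacobians of successive Lie derivatives of $\textbf{C}\textbf{x}$ along $\textbf{f}(\textbf{x}) := \textbf{A}_{(k)}\textbf{x}^{[k-1]}$, using the unfolded form \eqref{eq: ausysm} of the dynamics. Because every entry of the resulting matrix $\textbf{O}(\textbf{x})$ is a polynomial in $\textbf{x}$, the set where $\mathrm{rank}(\textbf{O}(\textbf{x})) = n$ is either empty or Zariski open and dense in $\mathbb{R}^n$; combined with Hermann--Krener this turns the rank condition into a biconditional characterization of local weak observability on an open dense subset of $M$.

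Two elementary identities will drive the calculation. The first is the gradient of a Kronecker power,
\[
\nabla_{\textbf{x}}\textbf{x}^{[p]} = \sum_{q=1}^{p}\textbf{x}^{[q-1]}\otimes\textbf{I}\otimes\textbf{x}^{[p-q]},
\]
which follows from the product rule applied to $\textbf{x}\otimes\cdots\otimes\textbf{x}$. The second is the Kronecker ``insertion'' identity
\[
(\textbf{u}\otimes\textbf{I}\otimes\textbf{v})\textbf{w} = \textbf{u}\otimes\textbf{w}\otimes\textbf{v},
\]
obtained by writing $\textbf{w} = 1\otimes\textbf{w}\otimes 1$ and applying the mixed-product property of the Kronecker product.

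The heart of the proof is the induction claim
\[
L^{j}_{\textbf{f}}(\textbf{C}\textbf{x}) = \textbf{C}\textbf{A}_{(k)}\textbf{F}_2\textbf{F}_3\cdots\textbf{F}_{j}\,\textbf{x}^{[d_j]}, \qquad d_j := 1 + j(k-2),
\]
for $j = 0,1,\dots,n-1$, where the $\textbf{F}$-product is empty for $j \leq 1$ and one verifies that $d_j$ coincides with the upper limit $(j-1)k-2j+3$ on the summation defining $\textbf{F}_{j+1}$. The base case $j=1$ is the direct identity $L_{\textbf{f}}(\textbf{C}\textbf{x}) = \textbf{C}\textbf{A}_{(k)}\textbf{x}^{[k-1]}$. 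The step from $j-1$ to $j$ proceeds by differentiating the inductive hypothesis via the Kronecker-power gradient identity, multiplying by $\textbf{f}(\textbf{x}) = \textbf{A}_{(k)}\textbf{x}^{[k-1]}$, collapsing each summand through the insertion identity, and recognizing the resulting sum as $\textbf{F}_j\textbf{x}^{[d_j]}$ after noting that $d_j = d_{j-1} + (k-2)$ and that the upper summation index of $\textbf{F}_j$ equals $d_{j-1}$. One final application of the Kronecker-power gradient identity then returns exactly the $(j+1)$-th row block of $\textbf{O}(\textbf{x})$.

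The main obstacle is navigating the almost symmetric structure without recourse to the simplifications available in the fully symmetric case of \cite{pickard2023observability}: because the last mode of $\mathscr{A}$ is distinguished, one cannot consolidate equal Lie derivative contributions by index permutation, so the entire argument must be carried out at the level of the $k$-mode unfolding $\textbf{A}_{(k)}$ using only the symmetry-free insertion identity. A secondary bookkeeping subtlety is confirming that the row and column dimensions of $\textbf{F}_2\cdots\textbf{F}_j$ chain correctly between $\textbf{x}^{[d_j]}$ on one side and $\textbf{C}\textbf{A}_{(k)}$ on the other; this reduces to checking $(j-1)k-2j+3 = d_{j-1}$ for all $j\geq 2$, after which the recursion is dimension-consistent at every step and the closed-form observability matrix of the proposition follows.
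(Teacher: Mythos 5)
Your proposal is correct and follows essentially the same route as the paper's own proof: both compute the Jacobians of the successive Lie derivatives of $\textbf C\textbf x$ along $\textbf A_{(k)}\textbf x^{[k-1]}$ in closed form, using the Kronecker-power gradient identity and the mixed-product property to collapse each time-differentiation into a factor $\textbf F_j$, and then invoke the Hermann--Krener rank criterion. Your write-up is somewhat more explicit than the paper's (a formal induction on $L^j_{\textbf f}(\textbf C\textbf x)=\textbf C\textbf A_{(k)}\textbf F_2\cdots\textbf F_j\textbf x^{[d_j]}$, the dimension bookkeeping $d_{j-1}=(j-1)k-2j+3$, and the Zariski-openness argument justifying the ``open dense set'' biconditional, which the paper leaves implicit), but the underlying argument is the same.
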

\begin{proof}
Since $\mathscr A\textbf x^{k-1}=\textbf A_{(k)}\textbf x^{[k-1]}$, the Lie derivative of the  output $\textbf C\textbf x$ along the vector field $\mathscr A\textbf x^{k-1}$ can be computed as follows:
		\begin{align*} 
			\frac{\partial}{\partial \textbf x}L_{\mathscr A\textbf x^{k-1}}^0\textbf C\textbf x&=\frac{\partial}{\partial \textbf x}\textbf C\textbf x=\textbf C,\\
			\frac{\partial}{\partial \textbf x}L_{\mathscr A\textbf x^{k-1}}^1\textbf C\textbf x&=\frac{\partial}{\partial\textbf x}\left(\frac{d}{dt}\textbf C\textbf x\right)=\frac{\partial}{\partial \textbf x}\textbf C\textbf A_{(k)}\textbf x^{[k-1]}=\textbf C\textbf A_{(k)}\sum_{q=1}^{k-1}\textbf x^{[q-1]}\otimes \textbf I\otimes\textbf x^{[k-1-q]},\\
			\frac{\partial}{\partial \textbf x}L_{\mathscr A\textbf x^{k-1}}^2\textbf C\textbf x&=\frac{\partial}{\partial\textbf  x}\left(\frac{d}{dt}\textbf C\textbf A_{(k)}\textbf x^{[k-1]}\right)=\frac{\partial}{\partial \textbf x}\textbf C\textbf A_{(k)}\textbf F_2\textbf x^{[2k-3]}\\&=\textbf C\textbf A_{(k)}\textbf F_2\sum_{q=1}^{2k-3}\textbf x^{[q-1]}\otimes\textbf I\otimes\textbf x^{[2k-3-q]},\\
			&\quad\vdots\\
			\frac{\partial}{\partial \textbf x}L_{\mathscr A\textbf x^{k-1}}^{n-1}\textbf C\textbf x&=\frac{\partial}{\partial\textbf x}\left(\frac{d}{dt}\textbf C\textbf A_{(k)}\textbf F_2\textbf F_3\cdots\textbf F_{n-2}\textbf x^{[(n-2)k-2n+5]}\right)\\&=\textbf C\textbf A_{(k)}\textbf F_2\textbf F_3\cdots\textbf F_{n-1}\sum_{q=1}^{(n-1)k-2n+3}\textbf x^{[q-1]}\otimes\textbf I\otimes\textbf x^{[(n-1)k-2n+3-q]},
		\end{align*}
where $\textbf F_j$ are defined above. Therefore, the result follows immediately.
\end{proof}	

The tensor-based condition stated in \cref{prop:observability} is simpler to evaluate than computing complex Lie derivatives of vector fields and can be used to assess the local weak observability of general HPDSs. When $k=2$, the result reduces to the classical Kalman's rank condition for observability in linear systems theory.  However, constructing the observability matrix using the full tensor representation is computationally expensive, with a time complexity of approximately \textcolor{black}{$\mathcal{O}(ln^{2nk}+kn^{2nk})$} assuming the dimension of the output is $l$.
 To address this challenge, we propose TTD- and HTD-based methods that leverage factor tensors and matrices to reduce computational costs, analogous to the approach used for the controllability matrix. 

\begin{corollary}\label{cor: ttdo}
The TTD-based HPDS \eqref{eq:ttdhpds} with linear output  is locally \\weakly observable for all $\textbf x$ in an open dense set of $M\subset\mathbb R^n$ if and only if the observability matrix defined as
\[
\textbf O_{\text{TTD}}(\textbf x)=\left(\begin{matrix}
			\textbf C\\
			\textbf C\sum_{q=1}^{k-1} J_1\left(\{\textbf{x},\textbf{x},\stackrel{q-1}\dots,\textbf{x},\textbf I,\textbf x,\textbf{x},\stackrel{k-1-q}{\dots\dots},\textbf{x}\}\right)\\
			\textbf C\sum_{q=1}^{2k-3} J_2\left(\{\textbf{x},\textbf{x},\stackrel{q-1}\dots,\textbf{x},\textbf I,\textbf x,\textbf{x},\stackrel{2k-3-q}{\dots\dots},\textbf{x}\}\right)\\
			\vdots\\
			\textbf	C\sum_{q=1}^{(n-1)k-2n+3} J_{n-1}\left(\{\textbf{x},\textbf{x},\stackrel{q-1}\dots,\textbf{x},\textbf I,\textbf x,\textbf{x},\stackrel{(n-1)k-2n+3-q}{\dots\dots\dots\dots},\textbf{x}\}\right)
		\end{matrix}\right)
\] 
where $J_1 (\mathcal{Z}_{k-1})= \sum_{j_0=1}^{r_0}\dots\sum_{j_k=1}^{r_k}(\mathscr V^{(1)}\times_2 \textbf z_1)_{j_0:j_1}\circ\dots\circ(\mathscr V^{(k-1)}\times_2 \textbf z_{k-1})_{j_{k-2}:j_{k-1}}\circ \mathscr V_{j_{k-1}:j_k}^{(k)}$ and $J_j$ are defined
recursively  for $j=2,3,\dots,n-1$ as
\begin{align*}
    J_j(\mathcal Z_{jk-2j+1}) = 
			\sum_{i=1}^{(j-1)k-2(j-1)+1} J_{j-1}\left(\hat{\mathcal Z}_{(j-1)k-2(j-1)+1}^i\right),
\end{align*}
has full rank $n$. Here, $\mathcal{Z}_p=\{\textbf z_1,\textbf z_2,\dots,\textbf z_p\}$ and $\hat{\mathcal Z}_{(j-1)k-2(j-1)+1}^i$ are derived from $\mathcal Z_{jk-2j+1}$ by merging the $i$th through $(i+k-2)$th elements using the TTD factor tensors such that $\hat{\mathcal Z}_{(j-1)k-2(j-1)+1}^i = \{\textbf{z}_1,\textbf{z}_2,\dots,\textbf{z}_{i-1},\hat{\textbf z},\textbf{z}_{i+k-1},\textbf{z}_{i+k},\dots,\textbf{z}_{jk-2j+1}\}$ where 
$
    \hat{\textbf z}=\sum_{j_0=1}^{r_0}\dots\sum_{j_k=1}^{r_k}(\mathscr V^{(1)}\times_2 \textbf z_{i})_{j_0:j_1}\circ\dots\circ(\mathscr V^{(k-1)}\times_2 \textbf z_{i+k-2})_{j_{k-2}:j_{k-1}}\circ \mathscr V_{j_{k-1}:j_k}^{(k)}.
$
\end{corollary}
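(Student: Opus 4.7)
The plan is to derive $\textbf{O}_{\mathrm{TTD}}(\textbf{x})$ directly from the full-tensor observability matrix of \cref{prop:observability} by replacing every contraction against $\textbf{A}_{(k)}$ with its equivalent TTD contraction, row by row. The key identity, which follows immediately from the TTD \eqref{eq:ttd} of $\mathscr{A}$ and the definition of tensor-vector multiplication, is that for arbitrary $\textbf{z}_1,\dots,\textbf{z}_{k-1}\in\mathbb{R}^n$,
\[
\textbf{A}_{(k)}\bigl(\textbf{z}_1\otimes\textbf{z}_2\otimes\cdots\otimes\textbf{z}_{k-1}\bigr)=\left[\prod_{p=1}^{k-1}\bigl(\mathscr{V}^{(p)}\times_2\textbf{z}_p\bigr)\mathscr{V}^{(k)}\right]^\top = J_1(\{\textbf{z}_1,\dots,\textbf{z}_{k-1}\}).
\]
The same identity persists when one $\textbf{z}_q$ is replaced by $\textbf{I}_n$: the tensor-vector product along that mode becomes a tensor-matrix product, producing the $n\times n$ block that $J_1(\{\textbf{x},\dots,\textbf{I},\dots,\textbf{x}\})$ is defined to capture.

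With this identity in hand, I would match the rows of $\textbf{O}_{\mathrm{TTD}}(\textbf{x})$ against $\textbf{O}(\textbf{x})$ one block at a time. The zeroth block is trivially $\textbf{C}$. The first-order block follows by substituting $\textbf{z}_p=\textbf{x}$ for $p\neq q$ and $\textbf{z}_q=\textbf{I}$ into the identity above and summing over $q=1,\dots,k-1$. For the remaining blocks I would proceed by induction on $j$, with the inductive hypothesis that $\textbf{C}\textbf{A}_{(k)}\textbf{F}_2\cdots\textbf{F}_{j-1}$ applied to the Kronecker sum in \cref{prop:observability} coincides with the $j$th row $\textbf{C}\sum_q J_{j-1}$ of $\textbf{O}_{\mathrm{TTD}}(\textbf{x})$. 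The inductive step rests on observing that multiplying by $\textbf{F}_j=\sum_{i}\textbf{I}^{[i-1]}\otimes\textbf{A}_{(k)}\otimes\textbf{I}^{[(j-1)k-2j+3-i]}$ inserts one more copy of $\textbf{A}_{(k)}$ that contracts exactly $k-1$ consecutive slots of the current Kronecker chain; under the TTD this collapses those $k-1$ arguments to the single vector $\hat{\textbf{z}}$ in the corollary statement, which is precisely the merging operation used to build $\hat{\mathcal{Z}}^{\,i}_{(j-1)k-2(j-1)+1}$ from $\mathcal{Z}_{jk-2j+1}$.

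Finally I would verify the index arithmetic: each $\textbf{F}_j$ increases the number of Kronecker factors by $k-2$, matching the jump from $(j-1)k-2(j-1)+1$ to $jk-2j+1$ argument slots in $J_j$ versus $J_{j-1}$, and the insertion index $i$ ranges over the same set in both the definition of $\textbf{F}_j$ and the recursion for $J_j$. Once every row of $\textbf{O}(\textbf{x})$ has been rewritten in TTD form, $\textbf{O}_{\mathrm{TTD}}(\textbf{x})$ and $\textbf{O}(\textbf{x})$ have the same column space (up to harmless reshaping of the identity blocks), so the rank condition transfers verbatim from \cref{prop:observability} and local weak observability follows.

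The main obstacle I anticipate is the bookkeeping needed to show that partial contractions through the TTD factors commute correctly with the identity-slot insertion: when the key identity is applied to a tuple containing an $\textbf{I}$, only $k-2$ of the tensor-vector products are scalar-valued while one becomes a tensor-matrix product, and when this block is subsequently merged into a longer chain via $\textbf{F}_j$ one must verify that the contraction producing $\hat{\textbf{z}}$ never straddles the identity slot. Establishing this locality is the technical heart of the argument; everything else is a mechanical matching of summation indices against the recursive definition of $J_j$.
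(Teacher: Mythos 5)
Your overall route is the same as the paper's: start from \cref{prop:observability}, use the identity $\textbf{A}_{(k)}(\textbf{z}_1\otimes\textbf{z}_2\otimes\cdots\otimes\textbf{z}_{k-1})=J_1(\{\textbf{z}_1,\dots,\textbf{z}_{k-1}\})$ furnished by the TTD of $\mathscr{A}$, rewrite the first-order block directly, and handle the higher blocks by pushing each insertion of $\textbf{A}_{(k)}$ coming from $\textbf{F}_j$ through the mixed-product property, so that it becomes exactly the merge producing $\hat{\textbf{z}}$; the rank condition then transfers block by block. The matching of the insertion index of $\textbf{F}_j$ with the recursion index of $J_j$, and the $k-2$ slot-count arithmetic, is precisely what the paper does (it writes out the $j=1,2$ cases explicitly and then invokes the recursion for the remaining rows).

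However, the step you single out as the ``technical heart'' --- verifying that the contraction producing $\hat{\textbf{z}}$ never straddles the identity slot --- is a step that would fail, because the claim is false. Take $k=4$ and the second Lie-derivative block: the Kronecker chain has $2k-3=5$ slots, and for identity position $q=2$ the term $i=1$ of $\textbf{F}_2=\sum_{i=1}^{3}\textbf{I}^{[i-1]}\otimes\textbf{A}_{(k)}\otimes\textbf{I}^{[3-i]}$ contracts slots $1,2,3$, which include the identity. The resolution is that no such locality is needed: when the merged window contains $\textbf{I}$, the factors $\mathscr{V}^{(p)}\times_2\textbf{z}_p$ in the definition of $\hat{\textbf{z}}$ are tensor-matrix rather than tensor-vector multiplications, so $\hat{\textbf{z}}$ is simply an $n\times n$ matrix rather than a vector, and the recursion for $J_{j-1}$ is applied to a tuple containing one matrix --- exactly what the corollary's definitions (and the paper's proof, where $\hat{\textbf{x}}$ is allowed to absorb the slot holding $\textbf{I}$) already accommodate. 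With that adjustment each block of $\textbf{O}_{\text{TTD}}(\textbf{x})$ equals the corresponding block of $\textbf{O}(\textbf{x})$ entrywise, not merely up to column space, and the rest of your plan goes through as written.
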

\begin{proof}
By utilizing the TTD-based representation of HPDSs and the mixed product property of the Kronecker product, and applying \cref{prop:observability}, we derive that
		\begin{align*}
			& \textbf C\textbf A_{(k)}\sum_{q=1}^{k-1}\textbf x^{[q-1]}\otimes \textbf I\otimes\textbf x^{[k-q-1]}=\textbf C\sum_{q=1}^{k-1}\textbf A_{(k)}\left(\textbf x^{[q-1]}\otimes\textbf I\otimes\textbf x^{[k-q-1]}\right)\\&\ =\textbf C\sum_{q=1}^{k-1} J_1\left(\{\textbf{x},\textbf x,\stackrel{q-1}\dots,\textbf{x},\textbf I,\textbf x, \textbf x,\stackrel{k-1-q}{\dots\dots},\textbf{x}\}\right).
			\end{align*}
		Denote the $i$th element in the set $\{\textbf{x},\textbf{x},\stackrel{q-1}\dots,\textbf{x},\textbf I,\textbf{x}, \textbf x,\stackrel{2k-3-q}{\dots\dots},\textbf{x}\}$ by $\textbf x_{q,i}$. It holds that
        {\small
			\begin{align*}
			& \textbf C\textbf A_{(k)}\textbf F_2\sum_{q=1}^{2k-3}\textbf x^{[q-1]}\otimes \textbf I\otimes\textbf x^{[2k-q-3]}=\textbf C\sum_{q=1}^{2k-3}\textbf A_{(k)}\textbf F_2\left(\textbf x^{[q-1]}\otimes\textbf I\otimes\textbf x^{[2k-q-3]}\right)
			\\&\ =\textbf C\sum_{q=1}^{2k-3}\textbf A_{(k)}\!\sum_{i=1}^{k-1}\!\left(\!\overbrace{\textbf (\textbf x_{q,1}\otimes\textbf x_{q,2}\otimes\stackrel{i-1}\cdots\otimes\textbf x_{q,i-1})\otimes\!\hat{\textbf x}\!\otimes (\textbf x_{q,i+k-1}\otimes\textbf x_{q,i+k}\otimes \stackrel{k-i-1}{\cdots\cdots}\otimes \textbf x_{q,2k-3})}^{k-1\ \text{elements}}\!\right)\\
			&\ =\textbf C\sum_{q=1}^{2k-3}J_2\left(\{\textbf{x},\textbf{x},\stackrel{q-1}\dots,\textbf{x},\textbf I,\textbf x,\textbf{x},\stackrel{2k-3-q}{\dots\dots},\textbf{x}\}\right),
		\end{align*}}
		where $\hat{\textbf x}$ is derived by merging the  $i$th through $(i+k-2)$th elements of $\{\textbf{x},\textbf{x},\stackrel{q-1}\dots,\textbf{x},\textbf{I},\textbf{x},\textbf{x},\stackrel{2k-3-q}{\dots\dots},\textbf{x}\}$ using the TTD factor tensors.
		Therefore, applying the mixed product property recursively leads to
        \begin{align*}
		&\textbf C\textbf A_{(k)}\textbf F_2\textbf F_3\cdots\textbf F_{n-1}\sum_{q=1}^{(n-1)k-2n+3}\textbf x^{[q-1]}\otimes\textbf I\otimes\textbf x^{[(n-1)k-2n-q+3]}\\
        &\quad=\textbf	C\sum_{q=1}^{(n-1)k-2n+3} J_{n-1}\left(\{\textbf{x},\textbf{x},\stackrel{q-1}\dots,\textbf{x},\textbf I,\textbf x,\textbf{x},\stackrel{(n-1)k-2n+3-q}{\dots\dots\dots\dots},\textbf{x}\}\right).
        \end{align*}
        Thus, the result follows immediately.
	\end{proof}

	\begin{remark}	
		If $r=\text{max}\{r_p \text{ }| \text{ } p=1,2,\dots,k-1\}$ is the maximum of the TT-ranks from the TTD of $\mathscr{A}$ and the dimension of the output is $l$, the computational complexity of computing $\textbf O_{\text{TTD}}(\textbf x)$ in \cref{cor: ttdo} can be estimated as \textcolor{black}{$\mathcal O(k^{n}n^{n+1}r^2+ln^2)$}.
	\end{remark}

% Clearly, if the maximum TT-rank $r$ is small, the computation of the observability matrix using TTD is significantly more efficient than using the full tensor representation.
% In \cref{alg: ttdo}, we offer detailed steps for the computation of $J_j(\mathcal Z_{jk-2j+1})$. Using a similar approach, the observability matrix based on the HTD representation can be derived in the same manner.

\begin{corollary}\label{cor: htdo}
The HTD-based HPDS \eqref{eq:htdhpds} with linear output is locally \\weakly observable for all $\textbf x$ in an open dense set of $M\subset\mathbb R^n$ if and only if the observability matrix defined as
\[
\textbf O_{\text{HTD}}(\textbf x)=\left(\begin{matrix}
			\textbf C\\
			\textbf C\sum_{q=1}^{k-1} J_1\left(\{\textbf{x},\textbf{x},\stackrel{q-1}\dots,\textbf{x},\textbf I,\textbf x,\textbf{x},\stackrel{k-1-q}{\dots\dots},\textbf{x}\}\right)\\
			\textbf C\sum_{q=1}^{2k-3} J_2\left(\{\textbf{x},\textbf{x},\stackrel{q-1}\dots,\textbf{x},\textbf I,\textbf x,\textbf{x},\stackrel{2k-3-q}{\dots\dots},\textbf{x}\}\right)\\
			\vdots\\
			\textbf	C\sum_{q=1}^{(n-1)k-2n+3} J_{n-1}\left(\{\textbf{x},\textbf{x},\stackrel{q-1}\dots,\textbf{x},\textbf I,\textbf x,\textbf{x},\stackrel{nk-2n+1-q}{\dots\dots\dots},\textbf{x}\}\right)
		\end{matrix}\right)
\] 
where $J_1(\mathcal{Z}_{k-1})= \phi\left((\textbf U_k\otimes \textbf z_{k-1}^\top\textbf U_{k-1}\otimes\cdots\otimes\textbf z_1^\top\textbf U_{1})(\otimes_{\mathcal Q\in\mathcal G_{d-1}}\textbf G_\mathcal Q)\cdots(\otimes_{\mathcal Q_\in\mathcal G_{0}}\textbf G_\mathcal Q)\right)$, and $J_j$ are defined
recursively  for $j=2,3,\dots,n-1$ as
\begin{align*}
    J_j(\mathcal Z_{jk-2j+1}) = 
			\sum_{i=1}^{(j-1)k-2(j-1)+1} J_{j-1}\left(\hat{\mathcal Z}_{(j-1)k-2(j-1)+1}^i\right),
\end{align*}
has full rank $n$. Here, $\phi$ reshapes a vector of size $N$ into a matrix of size $n\times (N/n)$ (assuming $N$ is divisible by $n$), $\mathcal{Z}_p=\{\textbf z_1,\textbf z_2,\dots,\textbf z_p\}$, and $\hat{\mathcal Z}_{(j-1)k-2(j-1)+1}^i$ are derived from $\mathcal Z_{jk-2j+1}$ by merging the $i$th through $(i+k-2)$th elements using the HTD factor matrices such that $\hat{\mathcal Z}_{(j-1)k-2(j-1)+1}^i = \{\textbf{z}_1,\textbf{z}_2,\dots,\textbf{z}_{i-1},\hat{\textbf z},\textbf{z}_{i+k-1},\textbf{z}_{i+k},\dots,\textbf{z}_{jk-2j+1}\}$ where 
$
    \hat{\textbf z}=\phi\left((\textbf U_k\otimes \textbf z_{i+k-2}^\top\textbf U_{k-1}\otimes\cdots\otimes\textbf z_i^\top\textbf U_{1})(\otimes_{\mathcal Q\in\mathcal G_{d-1}}\textbf G_\mathcal Q)\cdots(\otimes_{\mathcal Q_\in\mathcal G_{0}}\textbf G_\mathcal Q)\right).
$
\end{corollary}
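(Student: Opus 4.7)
The plan is to follow the same route as the proof of Corollary 4.7, only swapping the TTD representation of $\mathscr{A}$ for its HTD counterpart and adjusting the reshape operations accordingly. Starting from the observability matrix in Proposition 4.6, each block row is a product of the form $\textbf{C}\,\textbf{A}_{(k)}\textbf{F}_2\cdots\textbf{F}_{j-1}\sum_q \textbf{x}^{[q-1]}\otimes\textbf{I}\otimes\textbf{x}^{[\cdots]}$, so it suffices to understand how $\textbf{A}_{(k)}$ and each $\textbf{F}_i$ act on Kronecker products containing a single $\textbf{I}$ slot when $\mathscr{A}$ is given in HTD form.

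First I would establish the base case $J_1$. Substituting the HTD expression $\text{vec}(\mathscr{A}) = (\textbf{U}_k\otimes\cdots\otimes\textbf{U}_1)(\otimes_{\mathcal{Q}\in\mathcal{G}_{d-1}}\textbf{G}_\mathcal{Q})\cdots(\otimes_{\mathcal{Q}\in\mathcal{G}_0}\textbf{G}_\mathcal{Q})$ and repeatedly applying the mixed-product property $(\textbf{A}\otimes\textbf{B})(\textbf{C}\otimes\textbf{D})=(\textbf{A}\textbf{C})\otimes(\textbf{B}\textbf{D})$ yields the identity $\textbf{A}_{(k)}(\textbf{v}_1\otimes\cdots\otimes\textbf{v}_{k-1}) = (\textbf{U}_k\otimes\textbf{v}_{k-1}^\top\textbf{U}_{k-1}\otimes\cdots\otimes\textbf{v}_1^\top\textbf{U}_1)(\otimes_{\mathcal{Q}\in\mathcal{G}_{d-1}}\textbf{G}_\mathcal{Q})\cdots(\otimes_{\mathcal{Q}\in\mathcal{G}_0}\textbf{G}_\mathcal{Q})$, which was already used in the proof of Corollary 4.5. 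When one of the slots $\textbf{v}_p$ is replaced by $\textbf{I}_n$, the resulting output lives in $\mathbb{R}^{n^2}$ rather than $\mathbb{R}^n$; reshaping via $\phi$ produces the required $n\times n$ matrix, which matches the definition of $J_1$ in the statement exactly.

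For the inductive step I would examine how the matrices $\textbf{F}_j$ act in HTD form. By the definition of $\textbf{F}_j$ in Proposition 4.6, multiplication by $\textbf{F}_j$ inserts a single copy of $\textbf{A}_{(k)}$ across $k-1$ consecutive Kronecker slots of the long string of $\textbf{x}$'s with one embedded $\textbf{I}$. In HTD form each such insertion takes a window of $k-1$ entries $\textbf{z}_i,\ldots,\textbf{z}_{i+k-2}$ and collapses it to a single merged vector $\hat{\textbf{z}}$ built from $(\textbf{U}_k\otimes\textbf{z}_{i+k-2}^\top\textbf{U}_{k-1}\otimes\cdots\otimes\textbf{z}_i^\top\textbf{U}_1)$ times the transfer product, followed by a $\phi$-reshape whenever $\textbf{I}$ lies inside the window. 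Summing over all admissible insertion positions $i$ recovers the recursion $J_j(\mathcal{Z}_{jk-2j+1}) = \sum_{i} J_{j-1}(\hat{\mathcal{Z}}^i_{(j-1)k-2(j-1)+1})$, and assembling the block rows for $j=0,1,\ldots,n-1$ yields $\textbf{O}_{\text{HTD}}(\textbf{x})$.

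The main obstacle I anticipate is purely notational bookkeeping: keeping the Kronecker factor ordering consistent (the HTD vectorization reverses the natural mode ordering, so $\textbf{U}_k$ appears first while the transposed vectors $\textbf{z}_i^\top\textbf{U}_i$ appear in the reverse slot $i$), tracking the position of $\textbf{I}$ after each merge, and verifying that $\phi$ is applied at exactly the right stage so that the accumulated block row has the $n$ columns required by the full-rank condition. Once this bookkeeping is stabilized, the remaining arguments are structurally identical to the TTD proof of Corollary 4.7, with the TTD contractions $\mathscr{V}^{(p)}\times_2\textbf{z}$ replaced throughout by their HTD counterparts built from $\textbf{z}^\top\textbf{U}_p$ together with the transfer matrices $\textbf{G}_\mathcal{Q}$.
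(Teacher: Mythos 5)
Your proposal is correct and follows essentially the same route as the paper's own proof: both establish the base case $J_1$ by applying the HTD contraction identity (already used for the controllability corollary) with one Kronecker slot replaced by $\textbf I$, yielding a vector in $\mathbb{R}^{n^2}$ that $\phi$ reshapes to $n\times n$, and both handle the higher block rows by recursively applying the mixed-product property so that each summand of $\textbf F_j$ merges a window of $k-1$ consecutive slots, which is exactly the stated recursion for $J_j$. No substantive differences to report.
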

 \begin{proof}
Assume that the dynamic tensor $\mathscr A$ is in HTD form. Due to the almost symmetric nature of $\mathscr A$, it holds that $\textbf U_1=\textbf U_2=\dots=\textbf U_{k-1}$. By applying  tensor vector/matrix multiplications, we obtain
		\begin{align*}
			&\text{vec}\left(\textbf A_{(k)}\left(\textbf x^{[q-1]}\otimes\textbf I\otimes\textbf x^{[k-q-1]}\right)\right)\\&\ =(\textbf U_{k}\otimes_{s=1}^{q-1}\textbf x^\top\textbf U_{1}\otimes \textbf U_1\otimes_{s=1}^{k-q-1}\textbf x^\top\textbf U_{1})(\otimes_{\mathcal Q\in\mathcal G_{d-1}}\textbf G_\mathcal Q)\cdots(\otimes_{\mathcal Q\in\mathcal G_{0}}\textbf G_\mathcal Q)\in\mathbb{R}^{n^2\times 1}.
		\end{align*}
		It follows that
        \begin{align*}
			&\textbf A_{(k)}\left(\textbf x^{[q-1]}\otimes\textbf I\otimes\textbf x^{[k-q-1]}\right)=J_1\left(\{\textbf{x},\textbf{x},\stackrel{q-1}\dots,\textbf{x},\textbf I,\textbf x,\textbf{x},\stackrel{k-1-q}{\dots\dots},\textbf{x}\}\right)\in\mathbb{R}^{n\times n}.
		\end{align*}
		Similar to the proof in \cref{cor: ttdo}, by recursively applying the mixed product property and leveraging the fact that $\{\textbf{x},\textbf{x},\stackrel{q-1}\dots,\textbf{x},\textbf I,\textbf x,\textbf x,\stackrel{jk-2j+1-q}{\dots\dots\dots},\textbf{x}\}$ for $j=1,2,\dots,n$ contain a single matrix (i.e., the identity), it follows that 
			\begin{align*}
			&\textbf C\textbf A_{(k)}\textbf F_2\textbf F_3\cdots\textbf F_j\sum_{q=1}^{jk-2j+1}\textbf x^{[q-1]}\otimes\textbf I\otimes\textbf x^{[jk-2j-q+1]}\\
            &\quad=\textbf	C\sum_{q=1}^{jk-2j+1}J_j\left(\{\textbf{x},\textbf{x},\stackrel{q-1}\dots,\textbf{x},\textbf I,\textbf x,\textbf{x},\stackrel{jk-2j+1-q}{\dots\dots\dots},\textbf{x}\}\right).
		\end{align*}
        Therefore, the result follows immediately. 
	\end{proof}	

	\begin{remark}
		If $r=\text{max}\{r_\mathcal Q \text{ }| \text{ }\mathcal Q\in\mathcal T\}$ is the maximum hierarchical  rank from the HTD of $\mathscr{A}$ and the dimension of the output is $l$, the computational complexity of computing $\textbf O_{\text{HTD}}(\textbf x)$ in \cref{cor: htdo} can be estimated as \textcolor{black}{$\mathcal O(k^{n}n^{n+1}r^3+ln^2)$}.
	\end{remark}

Both results are similar to those for controllability. When the maximum TT-rank or hierarchical rank $r$ is small, calculating the observability matrix using TTD or HTD is significantly more efficient than using the full tensor representation. Moreover, the detailed steps for constructing  $J_j(\mathcal Z_{jk-2j+1})$ using TTD and HTD are provided in \cref{alg: ttdo} and \cref{alg: htdo}, respectively.

\section{Numerical examples}\label{sec: simulation}
We proceed to illustrate our framework through the following numerical experiments. All examples were conducted on a machine equipped with an Apple M3 chip, 16GB memory and MATLAB R2023b, utilizing Tensor Toolbox 3.6 \cite{kolda2023matlab}, TT Toolbox \cite{osetoolbox}, and HTD Toolbox \cite{kressner2012htucker}.  

\subsection{Memory consumption comparison}\label{sec: memory}
In this experiment, we compared the total number of model parameters required for the full, TTD-based, and HTD-based representations of tensor-based HPDSs. The dynamic tensors $\mathscr{A}\in\mathbb{R}^{n\times n\times \stackrel{k}{\cdots}\times n}$ were randomly generated using three distinct generation schemes:
\begin{itemize}
\item Symmetric tensor generation: A random symmetric tensor was generated and subsequently decomposed into TTD and HTD representations.
\item Low TT-Rank tensor generation: A random tensor with inherently low TT-ranks was directly generated, from which the full tensor and its HTD representation were obtained. 
\item Low hierarchical-rank tensor generation:
A random tensor with inherently low hierarchical ranks was directly generated, from which the full tensor and its TTD representation were obtained.
\end{itemize}  
 To evaluate these representations, we set the dimension $n=2$ and consider orders $k=5$, $10$, and $15$, respectively.
The memory usage of these system representations was computed based on their respective decompositions. The comparison of the number of parameters required for each representation is illustrated in \cref{fig: memroy_comparation}. The results demonstrate that both TTD and HTD representations effectively reduce the number of model parameters across all tensor generation schemes. Notably, the reduction is even more significant when the dynamic tensors inherently have low TT-ranks or hierarchical ranks. The choice between TTD and HTD representations depends on the specific values of the TT-ranks and hierarchical ranks, as these directly influence the trade-off between parameter reduction and approximation accuracy.

\begin{figure}[t]
\begin{center}			
\includegraphics[width=\textwidth]{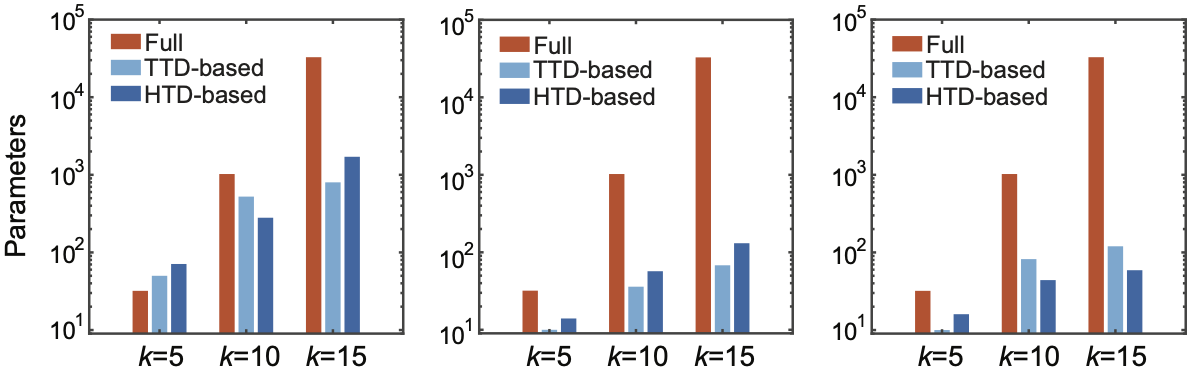}    % The printed column width is 8.4 cm.
\vspace{-15pt}
\caption{Comparison of memory consumption for the full, TTD-based, and HTD-based representations of tensor-based HPDSs across different tensor orders. The three panels, from left to right, represent symmetric tensor generation, low TT-rank tensor generation, and low hierarchical-rank tensor generation, respectively.}
\end{center}
\label{fig: memroy_comparation}
\vspace{-10pt}
\end{figure}

\subsection{Computation time comparison}

In this example, we constructed the controllability matrices for tensor-based HPDSs using the full, TTD-based, and HTD-based representations and evaluated their computational efficiency. The controllability matrix for each representation was constructed according to \cref{prop: controllability}, \cref{cor: ttdcon}, and \cref{cor: htdcon}. The dynamic tensors $\mathscr{A}\in\mathbb{R}^{n\times n\times \stackrel{k}{\cdots}\times n}$ were randomly generated using the same three schemes: symmetric tensor generation, low TT-rank tensor generation, and low hierarchical-rank tensor generation. The control matrices $\textbf B\in\mathbb{R}^{n\times 5}$ were also randomly generated. We considered three scenarios where the state dimension $n$ was set to $n = 5$, $10$, and $15$, and for each case, the value of order $k$ was incrementally increased. This setup allows us to systematically evaluate how computation time scales with both the state dimension and order across different generation schemes.

The computational time required for each representation was recorded and compared, as shown in \cref{fig: computation_time}. The full representation serves as a baseline for evaluating the effectiveness of the TTD-based and HTD-based representations in reducing computational complexity. Our results demonstrate that both the TTD and HTD representations significantly reduce computation time compared to the full representation, owing to their ability to efficiently exploit the low-rank structure of the dynamic tensors. The relative performance between the TTD- and HTD-based representations varies depending on the values of the TT-ranks and hierarchical ranks. In particular, when the tensor order $k$ is small, the TTD-based representation generally outperforms the HTD-based representation. These findings highlight the importance of selecting an appropriate tensor decomposition strategy based on the specific structural properties of the dynamic tensor.

\begin{figure}[t]
\begin{center}		
\includegraphics[width=\textwidth]{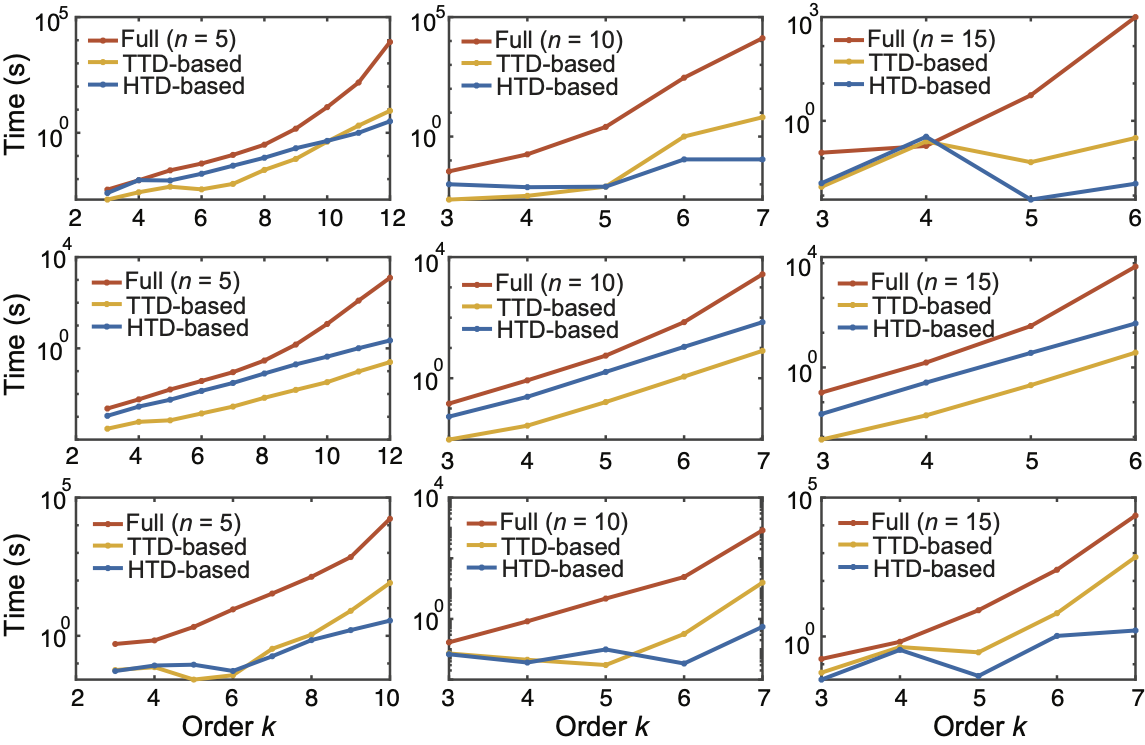}  
\vspace{-20pt}
\caption{Computational time comparison for full, TTD-based, and HTD-based representations of tensor-based HPDSs across varying tensor orders and dimensions. The panels in the first row display tensors generated with symmetry, the second row shows tensors generated with low TT-ranks, and the third row presents tensors generated with low hierarchical ranks.}
\label{fig: computation_time}
\end{center}
\vspace{-10pt}
\end{figure}

\section{Conclusion}\label{sec:conclusion}
In this article, we tackled the memory and computational challenges inherent in analyzing tensor-based HPDSs by introducing a novel computational framework built on TTD and HTD. We developed data-driven system identification techniques using TTD and HTD from time-series data. By leveraging the low-rank structures of the TTD-based and HTD-based representations, we derived efficient necessary and sufficient conditions for the controllability and observability of tensor-based HPDSs, significantly improving computational efficiency and reducing memory demands. Numerical simulations confirmed the effectiveness of our proposed framework. Future work will focus on extending these techniques to stochastic and hybrid systems, integrating data-driven tensor-based methods with machine learning algorithms to enhance scalability, adaptability, and real-time decision-making. Additionally, we plan to investigate the robustness and efficiency of these approaches in high-dimensional and uncertain environments, enabling their application to complex, higher-order networked systems. This work establishes a foundation for further advancements in tensor-based dynamical system analysis, bridging the gap between theory and real-world applications across fields such as biology, engineering, neuroscience, and other interdisciplinary domains.

% \section*{Acknowledgment} The authors used ChatGPT-4o to polish the text for spelling, grammar, and general style.

\bibliographystyle{plain}
\bibliography{reference}

\begin{thebibliography}{10}

\bibitem{aeyels1984local}
Dirk Aeyels.
\newblock Local and global controllability for nonlinear systems.
\newblock {\em Systems \& control letters}, 5(1):19--26, 1984.

\bibitem{ahmadi2019algebraic}
Amir~Ali Ahmadi and Bachir El~Khadir.
\newblock On algebraic proofs of stability for homogeneous vector fields.
\newblock {\em IEEE Transactions on Automatic Control}, 65(1):325--332, 2019.

\bibitem{bader2008efficient}
Brett~W Bader and Tamara~G Kolda.
\newblock Efficient matlab computations with sparse and factored tensors.
\newblock {\em SIAM Journal on Scientific Computing}, 30(1):205--231, 2008.

\bibitem{kolda2023matlab}
Brett~W Bader and Tamara~G Kolda.
\newblock Tensor toolbox for matlab, version 3.6.
\newblock {\em www.tensortoolbox.org}, 2023.

\bibitem{baillieul1981controllability}
John Baillieul.
\newblock Controllability and observability of polynomial dynamical systems.
\newblock {\em Nonlinear Analysis: Theory, Methods \& Applications}, 5(5):543--552, 1981.

\bibitem{bairey2016high}
Eyal Bairey, Eric~D Kelsic, and Roy Kishony.
\newblock High-order species interactions shape ecosystem diversity.
\newblock {\em Nature communications}, 7(1):12285, 2016.

\bibitem{bastolla2009architecture}
Ugo Bastolla, Miguel~A Fortuna, Alberto Pascual-Garc{\'\i}a, Antonio Ferrera, Bartolo Luque, and Jordi Bascompte.
\newblock The architecture of mutualistic networks minimizes competition and increases biodiversity.
\newblock {\em Nature}, 458(7241):1018--1020, 2009.

\bibitem{batselier2022low}
Kim Batselier.
\newblock Low-rank tensor decompositions for nonlinear system identification: A tutorial with examples.
\newblock {\em IEEE Control Systems Magazine}, 42(1):54--74, 2022.

\bibitem{batselier2017tensor}
Kim Batselier, Zhongming Chen, and Ngai Wong.
\newblock A tensor network kalman filter with an application in recursive mimo volterra system identification.
\newblock {\em Automatica}, 84:17--25, 2017.

\bibitem{bick2023higher}
Christian Bick, Elizabeth Gross, Heather~A Harrington, and Michael~T Schaub.
\newblock What are higher-order networks?
\newblock {\em SIAM review}, 65(3):686--731, 2023.

\bibitem{brazell2013solving}
Michael Brazell, Na~Li, Carmeliza Navasca, and Christino Tamon.
\newblock Solving multilinear systems via tensor inversion.
\newblock {\em SIAM Journal on Matrix Analysis and Applications}, 34(2):542--570, 2013.

\bibitem{chellaboina2009modeling}
Vijaysekhar Chellaboina, Sanjay~P Bhat, Wassim~M Haddad, and Dennis~S Bernstein.
\newblock Modeling and analysis of mass-action kinetics.
\newblock {\em IEEE Control Systems Magazine}, 29(4):60--78, 2009.

\bibitem{chen2022explicit}
Can Chen.
\newblock Explicit solutions and stability properties of homogeneous polynomial dynamical systems.
\newblock {\em IEEE Transactions on Automatic Control}, 68(8):4962--4969, 2022.

\bibitem{chen2024stability}
Can Chen.
\newblock On the stability of discrete-time homogeneous polynomial dynamical systems.
\newblock {\em Computational and Applied Mathematics}, 43(1):75, 2024.

\bibitem{chen2024tensor}
Can Chen.
\newblock {\em Tensor-Based Dynamical Systems}.
\newblock Springer, 2024.

\bibitem{chen2021controllability}
Can Chen, Amit Surana, Anthony~M Bloch, and Indika Rajapakse.
\newblock Controllability of hypergraphs.
\newblock {\em IEEE Transactions on Network Science and Engineering}, 8(2):1646--1657, 2021.

\bibitem{chen2021multilinear}
Can Chen, Amit Surana, Anthony~M Bloch, and Indika Rajapakse.
\newblock Multilinear control systems theory.
\newblock {\em SIAM Journal on Control and Optimization}, 59(1):749--776, 2021.

\bibitem{chernyshenko2014polynomial}
Sergei~I Chernyshenko, P~Goulart, D~Huang, and Antonis Papachristodoulou.
\newblock Polynomial sum of squares in fluid dynamics: a review with a look ahead.
\newblock {\em Philosophical Transactions of the Royal Society A: Mathematical, Physical and Engineering Sciences}, 372(2020):20130350, 2014.

\bibitem{chui2017kalman}
Charles~K Chui, Guanrong Chen, et~al.
\newblock {\em Kalman filtering}.
\newblock Springer, 2017.

\bibitem{cui2024discrete}
Shaoxuan Cui, Guofeng Zhang, Hildeberto Jard{\'o}n-Kojakhmetov, and Ming Cao.
\newblock On discrete-time polynomial dynamical systems on hypergraphs.
\newblock {\em IEEE Control Systems Letters}, 2024.

\bibitem{cui2025metzler}
Shaoxuan Cui, Guofeng Zhang, Hildeberto Jard{\'o}n-Kojakhmetov, and Ming Cao.
\newblock On metzler positive systems on hypergraphs.
\newblock {\em IEEE Transactions on Control of Network Systems}, 2025.

\bibitem{de2000multilinear}
Lieven De~Lathauwer, Bart De~Moor, and Joos Vandewalle.
\newblock A multilinear singular value decomposition.
\newblock {\em SIAM journal on Matrix Analysis and Applications}, 21(4):1253--1278, 2000.

\bibitem{dolgov2021tensor}
Sergey Dolgov, Dante Kalise, and Karl~K Kunisch.
\newblock Tensor decomposition methods for high-dimensional hamilton--jacobi--bellman equations.
\newblock {\em SIAM Journal on Scientific Computing}, 43(3):A1625--A1650, 2021.

\bibitem{dong2024controllability}
Anqi Dong, Xin Mao, Ram Vasudevan, and Can Chen.
\newblock Controllability and observability of temporal hypergraphs.
\newblock {\em IEEE Control Systems Letters}, 2024.

\bibitem{gorodetsky2018high}
Alex Gorodetsky, Sertac Karaman, and Youssef Marzouk.
\newblock High-dimensional stochastic optimal control using continuous tensor decompositions.
\newblock {\em The International Journal of Robotics Research}, 37(2-3):340--377, 2018.

\bibitem{grasedyck2010hierarchical}
Lars Grasedyck.
\newblock Hierarchical singular value decomposition of tensors.
\newblock {\em SIAM journal on matrix analysis and applications}, 31(4):2029--2054, 2010.

\bibitem{guckenheimer2013nonlinear}
John Guckenheimer and Philip Holmes.
\newblock {\em Nonlinear oscillations, dynamical systems, and bifurcations of vector fields}, volume~42.
\newblock Springer Science \& Business Media, 2013.

\bibitem{hackbusch2009new}
Wolfgang Hackbusch and Stefan K{\"u}hn.
\newblock A new scheme for the tensor representation.
\newblock {\em Journal of Fourier analysis and applications}, 15(5):706--722, 2009.

\bibitem{hermann1977nonlinear}
Robert Hermann and Arthur Krener.
\newblock Nonlinear controllability and observability.
\newblock {\em IEEE Transactions on automatic control}, 22(5):728--740, 1977.

\bibitem{hou2015hierarchical}
Ming Hou and Brahim Chaib-Draa.
\newblock Hierarchical tucker tensor regression: Application to brain imaging data analysis.
\newblock {\em 2015 IEEE international conference on image processing (ICIP)}, pages 1344--1348, 2015.

\bibitem{isidori1985nonlinear}
Alberto Isidori.
\newblock {\em Nonlinear control systems}.
\newblock Springer, 1985.

\bibitem{jurdjevic1985polynomial}
V.~Jurdjevic and I.~Kupka.
\newblock Polynomial control systems.
\newblock {\em Math. Annu.l}, 272:361--368, 1985.

\bibitem{kolda2009tensor}
Tamara~G Kolda and Brett~W Bader.
\newblock Tensor decompositions and applications.
\newblock {\em SIAM review}, 51(3):455--500, 2009.

\bibitem{kolda2006multilinear}
Tamara~Gibson Kolda.
\newblock Multilinear operators for higher-order decompositions.
\newblock {\em Tech. Report SAND2006-2081}, 2006.

\bibitem{kressner2012htucker}
Daniel Kressner and Christine Tobler.
\newblock htucker—a matlab toolbox for tensors in hierarchical tucker format.
\newblock {\em Mathicse, EPF Lausanne}, 2012.

\bibitem{liu2008hadamard}
Shuangzhe Liu, Gotz Trenkler, et~al.
\newblock Hadamard, khatri-rao, kronecker and other matrix products.
\newblock {\em International Journal of Information and Systems Sciences}, 4(1):160--177, 2008.

\bibitem{liu2016near}
Xiangdong Liu, Xing Xin, Zhen Li, and Zhen Chen.
\newblock Near optimal control based on the tensor-product technique.
\newblock {\em IEEE Transactions on Circuits and Systems II: Express Briefs}, 64(5):560--564, 2016.

\bibitem{ljung1998system}
Lennart Ljung.
\newblock {\em System identification}.
\newblock Springer, 1998.

\bibitem{ljung2010perspectives}
Lennart Ljung.
\newblock Perspectives on system identification.
\newblock {\em Annual Reviews in Control}, 34(1):1--12, 2010.

\bibitem{lorenz1963deterministic}
Edward~N Lorenz.
\newblock Deterministic nonperiodic flow.
\newblock {\em Journal of atmospheric sciences}, 20(2):130--141, 1963.

\bibitem{malizia2024reconstructing}
Federico Malizia, Alessandra Corso, Lucia~Valentina Gambuzza, Giovanni Russo, Vito Latora, and Mattia Frasca.
\newblock Reconstructing higher-order interactions in coupled dynamical systems.
\newblock {\em Nature Communications}, 15(1):5184, 2024.

\bibitem{melody2003nonlinear}
James Melody, Tamer Basar, and Francesco Bullo.
\newblock On nonlinear controllability of homogeneous systems linear in control.
\newblock {\em IEEE transactions on automatic control}, 48(1):139--143, 2003.

\bibitem{nelles2020nonlinear}
Oliver Nelles and Oliver Nelles.
\newblock {\em Nonlinear dynamic system identification}.
\newblock Springer, 2020.

\bibitem{osetoolbox}
IV~Oseledets, S.~Dolgov, V.~Kazeev, D.~Savostyanov, O~Lebedeva, P~Zhlobich, T.~Mach, and L.~Song.
\newblock Tt-toolbox.
\newblock {\em http://oseledets.github.io/software}, 2016.

\bibitem{oseledets2011tensor}
Ivan~V Oseledets.
\newblock Tensor-train decomposition.
\newblock {\em SIAM Journal on Scientific Computing}, 33(5):2295--2317, 2011.

\bibitem{oseledets2009breaking}
Ivan~V Oseledets and Eugene~E Tyrtyshnikov.
\newblock Breaking the curse of dimensionality, or how to use svd in many dimensions.
\newblock {\em SIAM Journal on Scientific Computing}, 31(5):3744--3759, 2009.

\bibitem{phan2013candecomp}
Anh-Huy Phan, Petr Tichavsk{\`y}, and Andrzej Cichocki.
\newblock Candecomp/parafac decomposition of high-order tensors through tensor reshaping.
\newblock {\em IEEE Transactions on Signal Processing}, 61(19):4847--4860, 2013.

\bibitem{pickard2023observability}
Joshua Pickard, Amit Surana, Anthony Bloch, and Indika Rajapakse.
\newblock Observability of hypergraphs.
\newblock {\em 2023 62nd IEEE Conference on Decision and Control (CDC)}, pages 2445--2451, 2023.

\bibitem{ragnarsson2012block}
Stefan Ragnarsson and Charles~F Van~Loan.
\newblock Block tensor unfoldings.
\newblock {\em SIAM Journal on Matrix Analysis and Applications}, 33(1):149--169, 2012.

\bibitem{reynders2012system}
Edwin Reynders.
\newblock System identification methods for (operational) modal analysis: review and comparison.
\newblock {\em Archives of Computational Methods in Engineering}, 19:51--124, 2012.

\bibitem{sidiropoulos2017tensor}
Nicholas~D Sidiropoulos, Lieven De~Lathauwer, Xiao Fu, Kejun Huang, Evangelos~E Papalexakis, and Christos Faloutsos.
\newblock Tensor decomposition for signal processing and machine learning.
\newblock {\em IEEE Transactions on signal processing}, 65(13):3551--3582, 2017.

\bibitem{van2020data}
Henk~J Van~Waarde, Jaap Eising, Harry~L Trentelman, and M~Kanat Camlibel.
\newblock Data informativity: A new perspective on data-driven analysis and control.
\newblock {\em IEEE Transactions on Automatic Control}, 65(11):4753--4768, 2020.

\bibitem{willems2005note}
Jan~C Willems, Paolo Rapisarda, Ivan Markovsky, and Bart~LM De~Moor.
\newblock A note on persistency of excitation.
\newblock {\em Systems \& Control Letters}, 54(4):325--329, 2005.

\end{thebibliography}

\appendix
\section{Algorithms}
    \begin{algorithm}[h]
		\caption{TTD-based $\text{RecursiveJ}(j,k,\mathcal Z_{jk-2j+1})$}\label{alg: ttdo}
		\begin{algorithmic}[1]
			\If {$j=1$}
			\State Set \[J_j(\textbf x)=\sum_{j_0=1}^{r_0}\dots\sum_{j_k=1}^{r_k}(\mathscr V^{(1)}\times_2\textbf z_1)_{j_0:j_1}\circ\dots\circ(\mathscr V^{(k-1)}\times_2\textbf z_{k-1})_{j_{k-2}:j_{k-1}}\circ \mathscr V_{j_{k-1}:j_k}^{(k)}\]
            
			\Return $ J_j(\textbf x)$
			\EndIf
			\State Set $ J_j(\textbf x)=0$
			\For {$i=1$ to $(j-1)k-2(j-1)+1$}
			\State Compute \[\hat{\textbf z}=\sum_{j_0=1}^{r_0}\dots\sum_{j_k=1}^{r_k}(\mathscr V^{(1)}\times_2\textbf z_i)_{j_0:j_1}\circ\dots\circ(\mathscr V^{(k-1)}\times_2\textbf z_{i+k-2})_{j_{k-2}:j_{k-1}}\circ \mathscr V_{j_{k-1}:j_k}^{(k)}\]
		\State Set $\hat{\mathcal Z}_{(j-1)k-2(j-1)+1}^i = \underbrace{\{\textbf{z}_1,\dots,\textbf{z}_{i-1},\hat{\textbf z},\textbf{z}_{i+k-1},\dots,\textbf{z}_{jk-2j+1}\}}_{(j-1)k-2(j-1)+1 \text{ elements}}$
			\State Set $ J_j(\textbf x )= J_j(\textbf x)+$ $\text{RecursiveJ}\left({j-1},k,\hat{\mathcal Z}_{(j-1)k-2(j-1)+1}^i\right)$
			\EndFor
			\Return $ J_j(\textbf x)$
		\end{algorithmic}
	\end{algorithm}

	\begin{algorithm}[ht]

		\caption{HTD-based $\text{RecursiveJ}(j,k,\mathcal Z_{jk-2j+1})$\label{alg: htdo}}
		\begin{algorithmic}[1]
          \For{$p=1$ to $k$}
			\State Set $\hat{\textbf U}_p=\textbf U_p$
			\EndFor
			\If {$j=1$}
			\For {$p=1$ to $k-1$}
			\State Set $\textbf U_p=\textbf z_p^\top\hat{\textbf U}_p$
			\EndFor
			\For {$l=\lceil\log_2k\rceil$ to $1$}
			\For {each internal node $\mathcal Q$ on level $l$}
			\State Set $\textbf U_\mathcal Q=(\textbf U_{\mathcal Q_r}\otimes \textbf U_{\mathcal Q_l})\textbf G_\mathcal Q$, $\textbf U_{\mathcal Q_l}$ and $\textbf U_{\mathcal Q_r}$ being child nodes of $\textbf U_\mathcal Q$.
			\EndFor
			\EndFor
			\State Set $ J_j(\textbf x)=\text{reshape}\left(\textbf U_{\{1,2,\dots,k\}},\left[n,\text{length}\left(\textbf U_{\{1,2,\dots,k\}}\right)/n\right]\right)$
			\EndIf\\
			\Return $  J_j(\textbf x)$
			\State Set $ J_j(\textbf x)=0$
			\For {$i=1$ to $(j-1)k-2(j-1)+1$}
               \For {$p=1$ to $k-1$}
			\State Set $\textbf U_p=\textbf z_{i+p-1}^\top\hat{\textbf U}_p$.
			\EndFor
			\For {$l=\lceil\log_2k\rceil$ to $1$}
			\For {each internal node $\mathcal Q$ on level $l$}
			\State Set $\textbf U_\mathcal Q=(\textbf U_{\mathcal Q_r}\otimes \textbf U_{\mathcal Q_l})\textbf G_\mathcal Q$,  $\textbf U_{\mathcal Ql}$ and $\textbf U_{\mathcal Qr}$ being child nodes of $U_\mathcal Q$
			\EndFor
			\EndFor
                \State Set $\hat{\textbf z}=\text{reshape}\left(\textbf U_{\{1,2,\dots,k\}},\left[n,\text{length}\left(\textbf U_{\{1,\dots,k\}}\right)/n\right]\right)$
			\State Set $\hat{\mathcal Z}_{(j-1)k-2(j-1)+1}^i = \underbrace{\{\textbf{z}_1,\dots,\textbf{z}_{i-1},\hat{\textbf z},\textbf{z}_{i+k-1},\dots,\textbf{z}_{jk-2j+1}\}}_{(j-1)k-2(j-1)+1 \text{ elements}}$
			\State Set $ J_j(\textbf x)= J_j(\textbf x)+\text{RecursiveJ}\left(j-1,k,\hat{\mathcal Z}_{(j-1)k-2(j-1)+1}^i\right)$.
			\EndFor\\
			\Return $ J_j(\textbf x)$
		\end{algorithmic}
	\end{algorithm}

\end{document}